\newtheorem{prop}{Proposition}[section]
\newtheorem{thm}[prop]{Theorem}
\newtheorem{lemm}[prop]{Lemma}
\newtheorem{coro}[prop]{Corollary}
\newtheorem{claim}[prop]{Claim}
\newtheorem*{claim*}{Claim}
\theoremstyle{definition}
\newtheorem{defi}[prop]{Definition}
\newtheorem{rmk}[prop]{Remark}
\newcommand{\CC}{\mathbb{C}}
\newcommand{\PP}{\mathbb{P}}
\newcommand{\RR}{\mathbb{R}}
\newcommand{\ZZ}{\mathbb{Z}}
\newcommand{\cC}{\mathcal C}
\newcommand{\cK}{\mathcal K}
\DeclareMathOperator{\tr}{tr}
\DeclareMathOperator{\im}{Im}
\DeclareMathOperator{\supp}{supp}
\DeclareMathOperator{\re}{Re}
\DeclareMathOperator{\Ric}{Ric}
\newcommand{\ep}{\varepsilon}
\setlist[enumerate]{leftmargin = 2em}
\numberwithin{equation}{section}
\title[Stable solutions to the abelian Yang--Mills--Higgs equations]{Stable solutions to the abelian Yang--Mills--Higgs equations on $S^2$ and $T^2$}
\author{Da Rong Cheng}
\address{Department of Mathematics, University of Chicago, Chicago, IL 60637}
\email{chengdr@uchicago.edu}
\begin{document}

\maketitle 
\begin{abstract} We show under natural assumptions that stable solutions to the abelian Yang--Mills--Higgs equations on Hermitian line bundles over the round $2$-sphere actually satisfy the vortex equations, which are a first-order reduction of the (second-order) abelian Yang--Mills--Higgs equations. We also obtain a similar result for stable solutions on a flat $2$-torus. Our method of proof comes from the work of Bourguignon--Lawson~\cite{BL} concerning stable $SU(2)$ Yang--Mills connections on compact homogeneous $4$-manifolds. 
\end{abstract}


\section{Introduction}
Let $\Sigma$ be an oriented surface equipped with a Riemannian metric $g$, and suppose $L$ is a complex line bundle over $\Sigma$ equipped with a Hermitian metric $\langle \cdot, \cdot \rangle$, so that for all $x \in \Sigma$, we have
\[
\langle \alpha \xi, \beta \eta \rangle = \alpha\overline{\beta}\langle \xi, \eta \rangle \text{ for all $\alpha, \beta \in \CC$ and $\xi, \eta \in L_{x}$.}
\]
\vskip 1mm
Given $\ep > 0$, we are interested in the following self-dual abelian Yang--Mills--Higgs action, which takes a section $u: \Sigma \to L$ and a metric connection $\nabla_{A}$ on $L$ as variables:
\[
E_{\ep}(u, \nabla_{A}) = \int_{\Sigma} \ep^{2} |F_{A}|^{2} + |\nabla_{A}u|^{2} + \frac{(1 - |u|^{2})^{2}}{4\ep^{2}} d\mu_{g}.
\]
Here $F_{A}$ denotes the curvature of the connection $\nabla_{A}$, and $d\mu_{g}$ is the volume form on $\Sigma$. Note that since $\nabla_{A}$ is a $U(1)$-connection, $F_A$ is in fact a $2$-form with values in $\sqrt{-1}\RR$. Historically, functionals of this type originated with the Ginzburg--Landau theory of superconductivity, and later made their way into elementary particle physics, where $U(1)$ may be replaced by other groups depending on the situation. The reader interested in a succinct account of the physical backgrounds may consult, for example~\cite[Chapter 1]{JT}.

A straightforward computation yields the Euler--Lagrange equations of $E_{\ep}$:
\begin{equation}\label{eq:YMH}
\left\{
\begin{array}{cl}
\nabla^{\ast}_{A} \nabla_{A}u &= \frac{1 - |u|^{2}}{2\ep^{2}}u, \\
\ep^{2} d^{\ast} F_{A} &= -\sqrt{-1}\re\langle \sqrt{-1}u, \nabla_A u \rangle = \sqrt{-1}\im\langle u, \nabla_A u \rangle,  
\end{array}
\right.
\end{equation}
where $\nabla_{A}^{\ast}$ is the formal adjoint of $\nabla_A$, and both sides of the second equation are $1$-forms valued in $\sqrt{-1}\RR$. Note also that the second equation is not an elliptic equation for the connection $\nabla_A$. This can be attributed to the gauge invariance of $E_{\ep}$, where
\begin{equation}\label{eq:gauge-invariance}
E_{\ep}(u, \nabla_A) = E_{\ep}(s\cdot u, \nabla_A - ds \cdot s^{-1}) \text{ for any }s : \Sigma \to U(1) \simeq S^{1}.
\end{equation}

The system~\eqref{eq:YMH} and its relatives have been the subject of extensive study, and there is by now a large literature on them, which we do not attempt to survey here. The interested reader is referred to the monographs~\cite{BBH, PT, SaSe} and the references therein. Below, we will focus on one particular aspect of the second-order equations~\eqref{eq:YMH}, namely that they admit special solutions given by first-order equations which arise from rewriting the functional $E_{\ep}$ in a particular way. Specifically, it was observed by Bogomol'nyi~\cite{Bol} that, with the choice of potential $\frac{(1 - |u|^{2})^{2}}{4}$ in the definition of $E_{\ep}$, the functional can be split into two parts whose difference, after an integration by parts, is a topological invariant, as follows:
\begin{align}\label{eq:Bogol}
E_{\ep}(u, \nabla_A) =\ & \int_{\Sigma} \frac{1}{4}|\nabla_A u - \sqrt{-1}\ast \nabla_A u|^{2} + \frac{1}{2}\big |\ep  \ast \sqrt{-1}F_{A} - \frac{1 - |u|^{2}}{2\ep}\big|^{2} \nonumber\\
&+ \int_{\Sigma} \frac{1}{4}|\nabla_A u + \sqrt{-1}\ast \nabla_A u|^{2} +\frac{1}{2} \big |\ep  \ast \sqrt{-1}F_{A} + \frac{1 - |u|^{2}}{2\ep}\big|^{2} \nonumber \\
=\ & \int_{\Sigma} \frac{1}{2}|\nabla_A u \mp \sqrt{-1}\ast \nabla_A u|^{2} + \big |\ep  \ast \sqrt{-1}F_{A} \mp \frac{1 - |u|^{2}}{2\ep}\big|^{2} \pm \int_{\Sigma} \sqrt{-1}F_{A},
\end{align}
where $\ast$ denotes the Hodge star operator on $\Sigma$, and the signs are chosen so that the term $\pm\int_{\Sigma}\sqrt{-1}F_{A}$ is nonnegative. Note that if $\Sigma$ is closed, then of course
\begin{equation}\label{eq:deg-vortex}
\frac{1}{2\pi}\int_{\Sigma}\sqrt{-1}F_{A} = \deg L.
\end{equation}
Also, if $\Sigma = \RR^{2}$, then by~\cite[Proposition II.3.5]{JT} and~\cite{Ai}, under the assumption that $E_{\ep}(u, \nabla_{A})$ is finite, we still have $\int_{\Sigma}\sqrt{-1}F_{A}  \in 2\pi\ZZ$, the integer being essentially the degree of $u$ at infinity. 

Thus, ~\eqref{eq:Bogol} provides a lower bound for $E_{\ep}(u, \nabla_{A})$ in terms of a topological quantity, namely $\frac{1}{2\pi}\int_{\Sigma}\sqrt{-1}F_{A}$, also known as the vortex number. Configurations $(u, \nabla_{A})$ which attain this bound satisfy, depending on the sign of the vortex number, one of the following two first-order systems, which we will refer to collectively as the vortex equations. 
\begin{equation}\label{eq:vortex}
\left\{
\begin{array}{cl}
\nabla_A u &=  \sqrt{-1}\ast \nabla_A u, \\
\ast \sqrt{-1}F_{A} &= \frac{1 - |u|^{2}}{2\ep^{2}}.
\end{array}
\right.
\end{equation}
\begin{equation}\label{eq:anti-vortex}
\left\{
\begin{array}{cl}
\nabla_A u &= - \sqrt{-1}\ast \nabla_A u, \\
\ast \sqrt{-1}F_{A} &=- \frac{1 - |u|^{2}}{2\ep^{2}}.
\end{array}
\right.
\end{equation}
Since solutions to~\eqref{eq:vortex} or~\eqref{eq:anti-vortex} minimize $E_{\ep}$ among configurations with the same vortex number, they are, in particular, stable solutions to~\eqref{eq:YMH} when $\Sigma$ is closed. Here by a stable solution we mean a solution at which the second variation of $E_{\ep}$ is positive semi-definite. (See Section~\ref{sec:YMH-basics}.) In view of this property of vortex solutions, it seems natural to ask whether the converse is also true; that is, given a stable solution $(u, \nabla_A)$ to~\eqref{eq:YMH} on a closed $\Sigma$, must it satisfy one of~\eqref{eq:vortex} and~\eqref{eq:anti-vortex}? Our main result gives a positive answer when $\Sigma$ is the round $S^2$ or flat $T^2$, provided $u$ is not the zero section. In the $S^2$ case, this last assumption can be dropped if $\ep$ is below a threshold that depends only on $\deg L$.

The reader may wonder if the vortex equations actually admit any solutions at all. Thus, we briefly digress to recall some fundamental existence and classification results for solutions to~\eqref{eq:vortex} and~\eqref{eq:anti-vortex}. When $\Sigma = \RR^2$, these are due to Taubes~\cite{Tau2}, who showed that, up to gauge equivalence, solutions with vortex number $d$ are in one-to-one correspondence with unordered $|d|$-tuples of points on $\RR^2$. On the other hand, if $\Sigma$ is a closed surface, a similar classification was established, using different methods, by Bradlow~\cite{Br1} and Garc\'ia-Prada~\cite{GP1} (see also Noguchi~\cite{No}), under the assumption that $4\pi |\deg L| < \ep^{-2}|\Sigma|$, with the case of equality addressed in~\cite[Theorem 4.7]{Br1}. (In fact they studied a slightly different equation, but the analysis is essentially the same.) Note that the condition 
\begin{equation}\label{eq:ep-threshold}
4\pi |\deg L| \leq \ep^{-2} |\Sigma|
\end{equation}
is necessary for either~\eqref{eq:vortex} or~\eqref{eq:anti-vortex} to admit a solution, as can be seen by integrating the second lines of~\eqref{eq:vortex} or~\eqref{eq:anti-vortex} over $\Sigma$ (\cite{Br1, GP1}).

We now return to the relation between stable solutions to~\eqref{eq:YMH} and solutions to the vortex equations, and state our main theorem.
\begin{thm}\label{thm:main} 
Let $\Sigma$ be the round $S^{2}$ or a flat $T^2$ and suppose $L$ is a Hermitian line bundle over $\Sigma$ with $\deg L = d$. Let $(u, \nabla_A)$ be a stable weak solution to~\eqref{eq:YMH} and assume either of the following two conditions: 
\begin{enumerate}
\item[(H)] $u$ is not identically zero.
\vskip 1mm
\item[(H')] $\Sigma$ is the round $S^{2}$ and $|d| \leq \ep^{-2}$.
\end{enumerate} 
Then $(u, \nabla_A)$ satisfies~\eqref{eq:vortex} or~\eqref{eq:anti-vortex}.
\end{thm}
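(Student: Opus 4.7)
My approach adapts the Bourguignon--Lawson strategy for stable Yang--Mills connections on homogeneous $4$-manifolds, where symmetries of the base generate test variations $\iota_X F_A$ for Killing $X$, and averaging their contributions to the second variation forces (anti-)self-duality. Here the pair $(u,\nabla_A)$ plays the role of the YM connection, and the quantity in place of $\|F^-\|^2_{L^2}$ is the \emph{Bogomol'nyi defect}
\[
D^\pm(u,\nabla_A) := \int_\Sigma \tfrac{1}{2}\bigl|\nabla_A u \mp \sqrt{-1}\ast\nabla_A u\bigr|^2 + \Bigl|\ep\ast\sqrt{-1}F_A \mp \tfrac{1-|u|^2}{2\ep}\Bigr|^2\,d\mu_g,
\]
whose vanishing is equivalent to~\eqref{eq:vortex} (resp.~\eqref{eq:anti-vortex}). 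The plan is to exhibit, from the symmetries of $\Sigma$, a family of variations whose second variations sum to $-c\,D^\pm(u,\nabla_A)$ for some $c > 0$ with sign determined by $\sgn(\deg L)$; stability then forces $D^\pm = 0$.

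As a first step I will derive the second variation of $E_\ep$ at a critical point. Expanding $E_\ep(u + s\phi,\nabla_A + s\alpha)$ and using~\eqref{eq:YMH} to kill the first-order term gives
\[
Q(\phi,\alpha) = \int_\Sigma \ep^2|d\alpha|^2 + |\nabla_A\phi + \alpha u|^2 + 2\re\langle\nabla_A u,\alpha\phi\rangle + \frac{2(\re\langle u,\phi\rangle)^2 - (1-|u|^2)|\phi|^2}{\ep^2}\,d\mu_g,
\]
for $\phi \in \Gamma(L)$, $\alpha \in \Omega^1(\Sigma,\sqrt{-1}\RR)$; stability asserts $Q \geq 0$ modulo the infinitesimal gauge-fixing constraint allowed by~\eqref{eq:gauge-invariance}.

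Next I will construct the test variations. For each vector field $X$ from the symmetry group of $\Sigma$ (translations on $T^2$, or a basis of $\mathfrak{so}(3)$ on $S^2$), the deformation $(\nabla_{A,X}u,\iota_X F_A)$ arising from the flow of $X$ is a zero mode of $Q$, since isometries preserve $E_\ep$. The trick is to combine this with its complex-structure twist: I will take variations of the schematic form
\[
(\phi_X,\alpha_X) = \bigl(\nabla^{(0,1)}_{A,X}u,\,(\iota_X F_A)^{(0,1)}\bigr),
\]
or the corresponding $(1,0)$-projection, according to the sign of $\deg L$. This symmetry-breaking between holomorphic and antiholomorphic modes is the natural $2$-dimensional analogue of the Killing-to-conformal passage used by Bourguignon--Lawson, and is precisely what distinguishes~\eqref{eq:vortex} from~\eqref{eq:anti-vortex}. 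The principal computation then invokes the Ricci identity $[\nabla_{A,i},\nabla_{A,j}]u = F_{ij}u$, the identities for Killing fields on $\Sigma$ ($\nabla X\equiv 0$ on $T^2$; $\nabla^2 X = -X$ for the round $S^2$), and the Euler--Lagrange equations~\eqref{eq:YMH}, and should collapse $\sum_X Q(\phi_X,\alpha_X)$ down to $-c\,D^\pm(u,\nabla_A)$.

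Finally I have to handle the hypotheses (H) and (H'). Under (H) the section $u$ is not identically zero, which keeps the test variations $(\phi_X,\alpha_X)$ from collapsing and makes the identity content-bearing. Under (H') on $S^2$, the threshold $|d|\leq \ep^{-2}$ combined with~\eqref{eq:ep-threshold} guarantees vortex solutions exist in the given line bundle, and a direct energy comparison should then rule out the flat configuration $u\equiv 0$ as stable. The main technical obstacle is the Bochner-type computation identifying $\sum_X Q(\phi_X,\alpha_X)$ with $-c\,D^\pm$: aligning signs so that exactly one of $D^\pm$ appears (with sign compatible with $\deg L$), handling the Ricci curvature contributions on $S^2$, and verifying that the gauge-fixing correction introduces no spurious terms will be the delicate points; I expect the flat $T^2$ case to be considerably cleaner.
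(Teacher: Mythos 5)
Your overall strategy---averaging the second variation over Killing fields of $\Sigma$ \`a la Bourguignon--Lawson---is exactly the right one, and your Higgs-field test direction $\phi_X = \nabla^{(0,1)}_{A,X}u$ matches the paper's $\sigma_X$ with $\sigma = \nabla_A u - \sqrt{-1}\ast\nabla_A u$. But there is a genuine gap in the connection part of the test variation. You propose $\alpha_X = (\iota_X F_A)^{(0,1)}$; besides the problem that a complex-structure projection of a real $1$-form is no longer $\sqrt{-1}\RR$-valued and hence not an admissible deformation of a metric connection, this choice misses the essential coupling to the Higgs potential. The variation that makes the computation close up is $\iota_X\varphi$ with $\varphi = \sqrt{-1}F_A - \ast\frac{1-|u|^2}{2\ep^2}$, i.e.\ the contraction of $X$ with the defect of the \emph{second} vortex equation, not a chirality projection of $F_A$. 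Relatedly, the trace over the Killing fields does not come out as $-c\,D^\pm$: with $f=\ast\sqrt{-1}F_A$ and $h=\frac{1-|u|^2}{2\ep^2}$ one gets $\tr_{\cK}Q = -\int_\Sigma (f+h)\,|\sigma|^2\,d\mu_g$, a cross term rather than the Bogomol'nyi defect. To conclude from $\int_\Sigma(f+h)|\sigma|^2\le 0$ you need the maximum-principle facts of Proposition~\ref{prop:estimates}: $\pm f\le h$ everywhere, with $f+h$ either identically zero or everywhere positive, and $f=\pm h$ equivalent to $d_Au=\pm\sqrt{-1}\ast d_Au$. This is where hypothesis (H) actually enters (the strong maximum principle argument and the final step $|u|^2(f-h)=0\Rightarrow f\equiv h$ both use $u\not\equiv 0$), not in ``keeping the test variations from collapsing.'' Your plan never produces the pointwise sign information needed to pass from the integral inequality to the conclusion.

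The treatment of (H$'$) is also not viable as stated. A ``direct energy comparison'' with vortex solutions can show that $u\equiv 0$ is not a minimizer, but stability is a local condition and is not contradicted by the existence of lower-energy competitors. The paper instead observes that $u\equiv 0$ forces $F_A$ harmonic, so $\ast\sqrt{-1}F_A = d/2$, and then invokes Kuwabara's sharp computation of the lowest eigenvalue of $d_A^\ast d_A$ on $S^2$ (equal to $|d|/2$), which combined with $\delta^2E_\ep(0,\nabla_A)(v,0) = \int 2|d_Av|^2 - \ep^{-2}|v|^2$ yields instability precisely when $|d|<\ep^{-2}$. You also omit the borderline case $|d|=\ep^{-2}$, where $u\equiv 0$ with harmonic curvature \emph{does} solve~\eqref{eq:vortex} or~\eqref{eq:anti-vortex} and must be accepted rather than excluded. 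Finally, since the theorem is stated for weak solutions, one needs the global gauge-fixing regularity statement (the paper's Proposition~\ref{prop:global-regularity}) before any of the pointwise identities can be used.
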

\begin{rmk}\label{rmk:main}
\begin{enumerate}
\item[(1)]
As opposed to the case $\Sigma = \RR^2$, where Taubes~\cite{Tau1} showed that \textit{any} finite-action solution to~\eqref{eq:YMH} in fact satisfies one of the vortex equations, Theorem~\ref{thm:main} with the word ``stable'' removed is false in general. For instance, if $L$ is trivial and $\ep$ is small enough, then the min-max construction of Pigati--Stern~\cite[Section 7.1]{PiSt} produces solutions $(u, \nabla_A)$ with $E_{\ep}(u, \nabla_A) > 0$ and $u \not\equiv 0$. (See equation (7.1) in~\cite{PiSt}.) These cannot satisfy either of the vortex equations, for otherwise we would get the following contradiction
\[
0 < E_{\ep}(u, \nabla_A) = \pm\int_{\Sigma}F_A = 0.
\]
Here in the middle equality we use~\eqref{eq:Bogol} and the right-most equality follows since here the bundle is trivial. 
\vskip 1mm
\item[(2)] The assumption $|d| \leq \ep^{-2}$ in (H') is just~\eqref{eq:ep-threshold} since $|S^2| = 4\pi$. This threshold for $\ep$ is optimal in that if $|d| > \ep^{-2}$, then the solution $(0, \nabla_A)$ with $F_A$ harmonic is stable but does not satisfy either~\eqref{eq:vortex} or~\eqref{eq:anti-vortex}. We elaborate on this at the end of Section~\ref{sec:stable-smooth}.
\end{enumerate}
\end{rmk}

For the remainder of this introduction, we will attempt to put Theorem~\ref{thm:main} into context, before briefly describing the idea of its proof. First, when $\Sigma$ is a convex domain in $\RR^2$, in which case $L$ is necessarily trivial, Jimbo--Sternberg~\cite{JiSt} established the constancy of stable solutions to~\eqref{eq:YMH} under a natural variational boundary condition, and for a more general class of potentials. For the Ginzburg--Landau equation $\ep^2\Delta u = (1 - |u|^2)u$ on complex-valued functions, a similar result was proved by Jimbo--Morita~\cite{JM}, assuming the homogeneous Neumann condition. Other related results on the classification of stable solutions to equations similar to~\eqref{eq:YMH} can be found, for instance, in~\cite{CaHo, Mat, Se, Ch}.

Next, recall that several other functionals in differential geometry admit special minimizers given by first-order equations. For instance, $\pm$-holomorphic maps are homotopy minimizers for the Dirichlet energy of maps between compact K\"ahler manifolds, and connections with self-dual or anti-self-dual curvatures minimize the Yang--Mills functional on compact $4$-manifolds. The relationship between complex subvarieties of K\"ahler manifolds and the area functional also falls into this framework, thanks to the Wirtinger inequality. In all these settings, ``stability $\Rightarrow$ first-order reduction'' results analogous Theorem~\ref{thm:main} have been obtained under suitable assumptions. See for instance~\cite{SY, BBDP} (harmonic maps),~\cite{BL, St} (Yang--Mills connections),~\cite{LS, Mi} (minimal submanifolds). 

Finally, we mention that the vortex equations admits various important generalizations; see for instance the surveys~\cite{GPsurvey}, ~\cite{BrGP}, and the references therein. We are particularly interested in the case of a Hermitian line bundle over a compact K\"ahler manifold $M$~\cite{Br1, GP2}. Here the solutions are again stable critical points of $E_{\ep}$, and are essentially in one-to-one correspondence with codimension-one complex subvarieties of $M$. In view of the recent work of Pigati--Stern~\cite{PiSt}, which revealed a close relationship between solutions to~\eqref{eq:YMH} on a Riemannian manifold and (real) codimension-two minimal submanifolds, it will be interesting to see whether a statement like Theorem~\ref{thm:main} holds when $M = \CC\PP^n$. The result, if true, would be an analogue of the classical theorem of Lawson--Simons~\cite{LS}, which reduces stable stationary integral currents in $\CC\PP^n$ to complex subvarieties, and would serve as further evidence for the link between $E_{\ep}$ and the volume functional in codimension two. We hope to address this question in a future work.

\vskip 2mm
\noindent\textbf{Method.} Here we assume $\Sigma$ is as in the statement of Theorem~\ref{thm:main}. The proof of Theorem~\ref{thm:main} shares a common theme with a lot of the results cited above, particularly the work of Bourguignon--Lawson~\cite[Section 10]{BL} on stable $SU(2)$ Yang--Mills connections on homogeneous four-manifolds. To describe the idea in our setting, we consider the one-parameter group of diffeomorphisms generated by a vector field $X$ on $\Sigma$. Pulling back a solution $(u, \nabla_A)$ via these diffeomorphisms yields a one-parameter family of configurations, along which we compute the second derivative of $E_{\ep}$. This has the same effect as computing the second variation of $E_{\ep}$ along the path $(u + t (\nabla_A u)_{X}, \nabla_A + t(\iota_{X}F_A))$ (see~\cite[pp. 198--199]{BL}), which has to be non-negative by the stability assumption. Of course, as $E_{\ep}$ is isometry-invariant, choosing $X$ to be a Killing field yields no useful information since we would get zero anyway, regardless of stability. However, information can be extracted if we keep $X$ Killing but replace $(u + t (\nabla_A u)_{X}, \nabla_A + t(\iota_{X}F_A))$ by $(u + t \sigma_X, \nabla_A - t\sqrt{-1}\iota_{X}\varphi)$, where 
\[
\sigma = \nabla_A u - \sqrt{-1}\ast \nabla_A u,\ \varphi =  \sqrt{-1}F_A - \ast\frac{1 - |u|^{2}}{2\ep^{2}}.
\]
This choice is inspired by the one in~\cite[Section 10]{BL}. Note that, a priori, the second variation of $E_{\ep}$ in this direction does not have to be zero, but if in addition $(u, \nabla_A)$ verifies~\eqref{eq:vortex} or~\eqref{eq:anti-vortex}, then $(\sigma_X, \iota_X \varphi) =  (0, 0)$  or $(2(\nabla_A u)_X, 2\sqrt{-1}\iota_X F_A)$, and the second variation vanishes in either case as $X$ is Killing. Thus one expects $\sigma$ and $\varphi$ to be helpful in detecting solutions to the vortex equations. 

Computing the second variations of $E_{\ep}$ along $(u + t \sigma_X, \nabla_A - t\sqrt{-1}\iota_{X}\varphi)$ gives rise to a quadratic form $Q$ defined over the space $\cK$ of Killing fields on $\Sigma$, which must be positive semi-definite if $(u, \nabla_A)$ is a stable solution. As in~\cite{BL}, the proof then boils down to taking the trace over $\cK$, and observing that when $\Sigma$ is as in the statement of Theorem~\ref{thm:main}, the resulting inequalities, together with some basic estimates for solutions of~\eqref{eq:YMH}, allow us to conclude the proof assuming (H). The prove the Theorem assuming (H') instead, we first observe that when $|d| = \ep^{-2}$, the conclusion holds when even if $u \equiv 0$. Then we argue that $u \equiv 0$ contradicts stability when $|d| < \ep^{-2}$, thanks to an estimate on the lowest eigenvalue of $d_A^{\ast}d_A$ due to Kuwabara~\cite{Ku}.
\vskip 2mm
\noindent\textbf{Notation.} For the rest of the paper, $(\Sigma, g)$ will be a closed oriented surface equipped with a Riemannian metric, and $L$ a Hermitian line bundle over $\Sigma$. The Levi--Civita connection on $\Sigma$ is denoted by $\nabla$, and the volume form by $d\mu_g$. The curvature convention we adopt is
\[
R_{X, Y}Z = \nabla^{2}_{X, Y}Z - \nabla^{2}_{Y, X}Z.
\]

We use the same pointed brackets $\langle \cdot, \cdot\rangle$ to denote any bundle metric on $\Lambda^{p}T^{\ast}\Sigma$ or $\Lambda^{p}T^{\ast}\Sigma \otimes L$ that is induced by $g$ and the Hermitian metric on $L$. As usual, we denote by $\Omega^{p}(\Sigma)$ the space of $p$-forms on $\Sigma$, and by $\Omega^{p}(L)$ the space of sections of $\Lambda^{p}T^{\ast}\Sigma \otimes L$. Integrating the bundle metrics over $\Sigma$ yields inner products on $\Omega^{p}(\Sigma)$ and $\Omega^{p}(L)$.

Fixing once an for all a smooth background metric connection $\nabla_0$ on $L$, any other metric connection can be written as $\nabla_{A} := \nabla_0 - \sqrt{-1}A$, where $A$ is a real $1$-form on $\Sigma$. The curvatures of $\nabla_A$ and $\nabla_0$ are then related by $F_A = F_0 - \sqrt{-1}dA$. The exterior derivative induced by $\nabla_{A}$ on $\Omega^{\ast}(L)$ is denoted $d_{A}$, and its formal adjoint $d_A^{\ast}$. Similarly, $\nabla_{A}^{\ast}$ denotes the adjoint of $\nabla_{A}$. For instance, for $\sigma \in \Omega^1(L)$, we have
\[
\nabla_{A}^{\ast}\sigma = -(\nabla_{A}\sigma)_{e_i, e_i}, 
\]
where the right-hand side is summed over an orthonormal basis $e_1, e_2$ of $T_{x}\Sigma$ at each $x \in \Sigma$. (Below, unless otherwise stated, repeated indices are always summed.) Also, for a section $u$ of $L$, we will use $d_A u$ and $\nabla_{A}u$ interchangeably.  

Next, by $\Delta$ we will always mean the Hodge Laplacian $d^{\ast}d + d d^{\ast}$, even when it is acting on $\Omega^{0}(\Sigma)$. Thus, for example, in this notation a real function $f$ is sub-harmonic if $\Delta f \leq 0$. Similarly, using $d_A$ and $d_{A}^{\ast}$, the Hodge Laplacian acting on $\Omega^{\ast}(L)$ is given by 
\[
\Delta_{A} = d_{A} d_{A}^{\ast} + d_{A}^{\ast}d_{A}.
\]
When there is no danger of confusion, we will sometimes drop the subscripts in $\nabla_{A}$, $\Delta_A$, $d_{A}$, $d^{\ast}_{A}$, etc. and simply write them as $\nabla, \Delta, d, d^{\ast}$, etc.

Finally, by a configuration we mean a pair $(u, \nabla_A)$ where $u$ is a section of $L$ and $\nabla_A$ is a metric connection on $L$, with regularity to be specified depending on the context. Given $\ep > 0$ and a configuration $(u, \nabla_A)$, we define
\begin{align*}
h_{(u, \nabla_A)} &= \frac{1 - |u|^{2}}{2\ep^{2}}\ (\text{$|u|$ is computed using the bundle metric on $L$}),\\
f_{(u, \nabla_A)} &= \ast \sqrt{-1}F_{A},\\
\sigma_{(u, \nabla_A)} &= \nabla_A u - \sqrt{-1}\ast \nabla_A u,\\
\varphi_{(u, \nabla_A)} &= \sqrt{-1}F_A - \ast h = \ast (f - h).
\end{align*}
The subscripts $(u, \nabla_A)$ will be dropped when it's clear from the context which configuration we mean. Also, note that $h, f, \varphi$ stay unchanged when we switch from $(u, \nabla_A)$ to a gauge equivalent configuration $(e^{\sqrt{-1}\theta}u, \nabla_A - \sqrt{-1}d\theta)$, whereas $\sigma$ transforms by
\[
\sigma_{(e^{\sqrt{-1}\theta}u, \nabla_A - \sqrt{-1}d\theta)} = e^{\sqrt{-1}\theta}\sigma_{(u, \nabla_A)}.
\]
Nonetheless, $|u|, |\sigma|$ and $\langle u, \sigma \rangle$ are still gauge invariant. Other notation and terminology will be introduced when needed.
\vskip 2mm
\noindent\textbf{Organization.} In Section~\ref{sec:weitz} we review a couple of Weitzenb\"ock-type formulas and note some consequences which are important for the computations to follow. Section~\ref{sec:YMH-basics} collects a number of basic facts about $E_{\ep}$ and~\eqref{eq:YMH}, including the first and second variation formula, regularity of weak solutions up to change of gauge, and some basic pointwise estimates which help us distinguish vortices from other solutions of~\eqref{eq:YMH}. At the end we also recall how to derive~\eqref{eq:Bogol}. In Section~\ref{sec:stable-smooth}, we prove Theorem~\ref{thm:main} and elaborate on Remark~\ref{rmk:main}(2). 

\vskip 2mm
\noindent\textbf{Acknowledgments.} I would like to thank Andr\'e Neves and Guangbo Xu for helpful conversations related to this work.

\section{Review of Weitzenb\"ock formulas and some consequences}\label{sec:weitz}
Both of the Weitzenb\"ock formulas recalled below are standard and the proofs can be found essentially in~\cite[Section 3]{BL}. Note that because $U(1)$ is abelian, the formulas simplify somewhat in our case. 
\begin{prop}\label{prop:weitzenbock} Let $\sigma \in \Omega^{1}(L)$ and $\varphi \in \Omega^{2}(\Sigma)$. Then the following hold.
\begin{enumerate}
\item[(a)] (See also~\cite[Theorem 3.2]{BL})$\\
(\Delta_{A}\sigma)_{X} = (\nabla_{A}^{\ast}\nabla_{A} \sigma)_{X} + (F_{A})_{e_{i}, X}\sigma_{e_{i}} + \sigma_{\Ric(X)}$, where the second term on the right-hand side is summed over an orthonormal basis $\{e_{i}\}$ of $T_{x}\Sigma$.
\vskip 1mm
\item[(b)] (\cite[Theorem 3.10]{BL})\\
$(\Delta \varphi)_{X, Y} = (\nabla^{\ast}\nabla\varphi)_{X, Y} + \varphi_{\Ric(X), Y} + \varphi_{X, \Ric(Y)} + \varphi_{e_{i}, R_{X, Y}e_{i}}$, where again the last term is summed over an orthonormal basis, and $R_{X, Y}$ denotes the curvature tensor on the base manifold $\Sigma$.
\end{enumerate}
\end{prop}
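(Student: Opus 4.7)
The plan is to establish both formulas by fixing an arbitrary point $x \in \Sigma$ and choosing a local orthonormal frame $\{e_1, e_2\}$ that is synchronous at $x$, i.e., $\nabla_{e_j} e_i = 0$ at $x$. This reduces the identities to tensorial computations at $x$, with all curvature terms arising solely from commutators of covariant derivatives. Since the statement is local and pointwise, this is the natural setup; and since $U(1)$ is abelian, the bundle curvature $F_A$ is an honest imaginary-valued $2$-form on $\Sigma$, so no Lie-bracket corrections appear, and things simplify compared to the general discussion in \cite[Section 3]{BL}.

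For part (a), I would first write $(d_A \sigma)_{e_i, e_j} = (\nabla_A \sigma)_{e_i, e_j} - (\nabla_A \sigma)_{e_j, e_i}$ and $d_A^{\ast}\sigma = -(\nabla_A \sigma)_{e_i, e_i}$, then apply $d_A^{\ast}$ and $d_A$ respectively and evaluate at $x$ in the synchronous frame. The second covariant derivatives of $\sigma$ combine to give $-(\nabla_A^{2}\sigma)_{e_i, e_i, X} = (\nabla_A^{\ast}\nabla_A \sigma)_X$ as the principal term. What remains is a commutator of the form $[\nabla_{e_i}^A, \nabla_X^A]\sigma$ acting in the $e_i$-slot of $\sigma$. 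This commutator splits into two contributions: the bundle curvature piece, which evaluates to $(F_A)_{e_i, X}\sigma_{e_i}$ because $\sigma$ takes values in the line bundle $L$; and the base curvature piece, coming from rearranging the tangent-vector indices via the Levi--Civita connection, which upon contracting the $i$-index of the Riemann tensor produces the Ricci term $\sigma_{\Ric(X)}$.

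For part (b), the argument is essentially the same, but applied to a form $\varphi \in \Omega^{2}(\Sigma)$ with trivial (scalar) coefficients, so no $F_A$-contribution appears. Computing $d d^{\ast}\varphi$ and $d^{\ast} d\varphi$ at $x$ in the synchronous frame isolates $(\nabla^{\ast}\nabla\varphi)_{X, Y}$ as before, and leaves commutators $[\nabla_{e_i}, \nabla_X]\varphi$ and $[\nabla_{e_i}, \nabla_Y]\varphi$ acting on a $2$-form with two base-tangent indices. Each commutator produces one Ricci-type term (via contracting the Riemann tensor with the summed index $i$ in one slot of $\varphi$) and one Riemann-type term from the remaining slot, assembling into $\varphi_{\Ric(X), Y} + \varphi_{X, \Ric(Y)} + \varphi_{e_i, R_{X, Y}e_i}$.

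I expect the main obstacle to be bookkeeping: keeping consistent signs under the curvature convention $R_{X, Y}Z = \nabla^2_{X, Y}Z - \nabla^2_{Y, X}Z$ stated in the introduction, and correctly identifying the sign of $F_A$ when it appears as a commutator on $L$. A related subtlety is the interaction of the Hodge star with the sign conventions for $d^{\ast}$ in dimension two; since $\Sigma$ is $2$-dimensional and $\varphi \in \Omega^2(\Sigma)$ is a top-degree form, one might be tempted to exploit that $d\varphi = 0$ automatically, but the formula as stated is pointwise and tensorial and does not require this shortcut. Once the signs are fixed, the identities follow by reading off the two sides of each commutator and collecting terms, exactly as in \cite[Theorems 3.2 and 3.10]{BL} with the simplifications noted above.
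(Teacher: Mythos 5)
The paper does not actually prove this proposition: it states that both formulas are standard and defers to \cite[Section 3]{BL} (Theorems 3.2 and 3.10 there), so there is no in-paper argument to compare against. Your outline is the standard synchronous-frame computation that those references carry out, and it is correct: writing $\Delta_A = d_A d_A^{\ast} + d_A^{\ast}d_A$ in a frame with $\nabla e_i = 0$ at the point, the terms reorganize into $\nabla_A^{\ast}\nabla_A$ plus commutators of second covariant derivatives, and the Ricci identity on $T^{\ast}\Sigma \otimes L$ produces exactly $(F_A)_{e_i, X}\sigma_{e_i} + \sigma_{\Ric(X)}$ in (a) (with $\sigma_{R_{e_i,X}e_i} = -\sigma_{\Ric(X)}$ under the paper's convention), while the abelianness of $U(1)$ ensures the curvature acts by scalar multiplication with no Lie-bracket corrections, as you note. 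The one step your sketch for (b) glosses over is that the raw commutator computation yields the two mixed terms $-\varphi_{e_i, R_{e_i,X}Y} + \varphi_{e_i, R_{e_i,Y}X}$ rather than a single ``Riemann-type term from the remaining slot''; you need the first Bianchi identity $R_{e_i,X}Y + R_{X,Y}e_i + R_{Y,e_i}X = 0$ to combine them into $\varphi_{e_i, R_{X,Y}e_i}$. That is a routine but genuinely necessary step, and with it your argument is complete.
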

Besides the Weitzenb\"ock formulas, we also need to know how differential operators like $d, d^{\ast}$ and $d_{A}, d_{A}^{\ast}$ interact with the operation of contracting with Killing vector fields on $\Sigma$. We don't think these formulas are new, but we still include their proofs for the reader's convenience.
\begin{lemm}\label{lemm:contraction} Let $\sigma$ and $\varphi$ be as in Proposition~\ref{prop:weitzenbock} and suppose $X$ is a Killing vector field on $\Sigma$. Then the following hold.
\begin{enumerate}
\item[(a)] $d_{A}^{\ast}d_{A}(\sigma_{X}) = (\Delta_{A}\sigma)_{X} - (F_{A})_{e_{i}, X}\sigma_{e_{i}} - \langle dX^{\flat}, d_A\sigma \rangle$, where $X^{\flat}$ is the $1$-form dual to $X$, and $\langle dX^{\flat}, d_A\sigma \rangle \in \Omega^0(L)$ is given by
\[
\langle dX^{\flat}, d_A\sigma \rangle = 2\sum_{i < j}\langle \nabla_{e_i}X, e_j \rangle (d_A\sigma)_{e_i, e_j}.
\]
\vskip 1mm
\item[(b)] $d^{\ast} (\iota_{X}\varphi) = -(d^{\ast}\varphi)_{X} + \langle dX^{\flat}, \varphi \rangle$.
\vskip 1mm
\item[(c)] $(\Delta \iota_{X}\varphi)_{e_j} = (\Delta \varphi)_{X, e_{j}} - \varphi_{e_{i}, R_{X, e_{j}}e_i} - 2(\nabla\varphi)_{e_i, \nabla_{e_i}X, e_j}$.
\vskip 1mm
\item[(d)] $d^{\ast}d(\iota_{X}\varphi)_{e_{j}} = (\Delta \varphi)_{X, e_{j}} + (d\iota_{X}d^{\ast}\varphi)_{e_j}$.
\end{enumerate}
\end{lemm}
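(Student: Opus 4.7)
The plan is to verify each of the four identities pointwise at an arbitrary $p \in \Sigma$, working in an orthonormal frame $\{e_i\}$ that is parallel at $p$ (so $\nabla_{e_i}e_j|_p = 0$). The Killing hypothesis on $X$ will enter through two standard facts: the pointwise antisymmetry $\langle \nabla_{e_i}X, e_j\rangle = -\langle \nabla_{e_j}X, e_i\rangle$, and the trace identity $\sum_i \nabla_{e_i}\nabla_{e_i}X = -\Ric(X)$ for Killing $X$. I will also use repeatedly that $(dX^{\flat})(e_i, e_j) = 2\langle \nabla_{e_i}X, e_j\rangle$, which is what allows an expression of the form $\sum_{i,j}\langle \nabla_{e_i}X, e_j\rangle\omega(e_i, e_j)$, for an antisymmetric $\omega$, to be identified with $\langle dX^{\flat}, \omega\rangle$.

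For (a), note that $\sigma_X$ is a section of $L$ and hence $d_A^{\ast}d_A(\sigma_X) = \nabla_A^{\ast}\nabla_A(\sigma_X)$. A Leibniz expansion of $\nabla_{e_i}\nabla_{e_i}(\sigma_X)$ at $p$ produces three terms, $(\nabla_{e_i}\nabla_{e_i}\sigma)_X$, $2(\nabla_{e_i}\sigma)_{\nabla_{e_i}X}$, and $\sigma_{\nabla_{e_i}\nabla_{e_i}X}$; summing over $i$ and applying the Killing trace identity to the last gives
\[
\nabla_A^{\ast}\nabla_A(\sigma_X) = (\nabla_A^{\ast}\nabla_A\sigma)_X - 2(\nabla_{e_i}\sigma)_{\nabla_{e_i}X} + \sigma_{\Ric(X)}.
\]
Substituting the Weitzenb\"ock formula from Proposition~\ref{prop:weitzenbock}(a) for $(\nabla_A^{\ast}\nabla_A\sigma)_X$ cancels the Ricci term and produces $(\Delta_A\sigma)_X - (F_A)_{e_i, X}\sigma_{e_i}$, while the remaining cross-term is rewritten as $-\langle dX^{\flat}, d_A\sigma\rangle$ after antisymmetrizing in the pair $(i, \nabla_{e_i}X)$. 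Part (b) follows by the same mechanism one order down: Leibniz gives
\[
d^{\ast}(\iota_X\varphi) = -(\nabla_{e_i}\varphi)(X, e_i) - \varphi(\nabla_{e_i}X, e_i)
\]
at $p$, and the two summands are respectively $-(d^{\ast}\varphi)_X$ (by antisymmetry of $\varphi$) and $\langle dX^{\flat}, \varphi\rangle$. For (c), the two Weitzenb\"ock formulas of Proposition~\ref{prop:weitzenbock}, applied to $\iota_X\varphi$ and $\varphi$ respectively, reduce the identity to
\[
(\nabla^{\ast}\nabla\iota_X\varphi)_{e_j} = (\nabla^{\ast}\nabla\varphi)_{X, e_j} + \varphi_{\Ric(X), e_j} - 2(\nabla\varphi)_{e_i, \nabla_{e_i}X, e_j},
\]
which is once more a Leibniz-at-$p$ computation closed out by the Killing trace identity.

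For (d), I would take advantage of $\dim\Sigma = 2$. Since then $d\varphi = 0$ automatically, Cartan's formula gives $d(\iota_X\varphi) = \mathcal{L}_X\varphi$ and $\Delta\varphi = dd^{\ast}\varphi$. Since $X$ is Killing, $\mathcal{L}_X$ commutes with the Hodge star and with $d$, hence with $d^{\ast} = \pm\ast d\ast$, so
\[
d^{\ast}d(\iota_X\varphi) = d^{\ast}\mathcal{L}_X\varphi = \mathcal{L}_X d^{\ast}\varphi = \iota_X d(d^{\ast}\varphi) + d\iota_X(d^{\ast}\varphi) = \iota_X\Delta\varphi + d\iota_X d^{\ast}\varphi,
\]
where the third equality is Cartan's formula applied to the $1$-form $d^{\ast}\varphi$; evaluating at $e_j$ yields (d). I expect the main obstacle to be purely bookkeeping, namely combining the covariant-derivative expansions, Weitzenb\"ock identities, and Killing relations in (a) and (c) without sign or indexing errors; (b) and (d) are, by contrast, quite short once the correct identities are set up.
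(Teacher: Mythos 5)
Your proofs of (a) and (b) follow the paper's own argument essentially verbatim: Leibniz expansion in a frame parallel at $p$, the Killing identity $\nabla^{2}_{V,W}X = R_{V,X}W$ (whose trace gives your $\sum_i\nabla_{e_i}\nabla_{e_i}X = -\Ric(X)$), substitution of Proposition~\ref{prop:weitzenbock}(a), and antisymmetrization of $\langle\nabla_{e_i}X,e_j\rangle$ against $(\nabla_A\sigma)_{e_i,e_j}$ to produce $\langle dX^{\flat}, d_A\sigma\rangle$ — all of this checks out. For (c) you reorganize slightly: rather than computing $d^{\ast}d(\iota_X\varphi)$ and $dd^{\ast}(\iota_X\varphi)$ separately by hand as the paper does, you subtract the Bochner formula for the $1$-form $\iota_X\varphi$ from the Weitzenb\"ock formula for $\varphi$ and reduce to a rough-Laplacian identity that is a one-line Leibniz computation; this is equivalent and arguably tidier, though you should note that Proposition~\ref{prop:weitzenbock}(a) applies to $\iota_X\varphi$ only after specializing to the trivial bundle with flat connection (so the $F_A$ term drops out). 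The genuine divergence is in (d): the paper derives it from (c) by computing $-dd^{\ast}(\iota_X\varphi)$ via part (b) and cancelling a page of curvature and $\nabla\varphi$ terms, whereas you bypass (c) entirely using $d\varphi = 0$ (top degree), Cartan's formula $d\iota_X\varphi = \mathcal{L}_X\varphi$, the commutation of $\mathcal{L}_X$ with $\ast$ (hence with $d^{\ast}$) for Killing $X$, and Cartan again on the $1$-form $d^{\ast}\varphi$ together with $\Delta\varphi = dd^{\ast}\varphi$. Each step there is valid, and this route is shorter, coordinate-free, and makes the role of the Killing hypothesis transparent; its only cost is that it is special to the situation where $\varphi$ is closed, which here is automatic. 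No gaps.
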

\begin{proof}
Fix a point $p \in \Sigma$ and let $\{e_{i}\}$ be a local orthonormal frame near $p$ with $\nabla e_{i} = 0$ at $p$ for all $i$. To see (a), we start by computing (below we drop the subscripts in $\nabla_A, d_{A}$, etc.)
\begin{align*}
d^{\ast}d(\sigma_{X}) &= -\nabla_{e_{i}}\nabla_{e_{i}}(\sigma_X) = -\nabla_{e_{i}} \big( (\nabla\sigma)_{e_{i}, X} + \sigma_{\nabla_{e_i}X}\big)\\
&= -(\nabla^{2}\sigma)_{e_i, e_i, X} - 2(\nabla\sigma)_{e_i, \nabla_{e_i}X} - \sigma_{\nabla^{2}_{e_i, e_i}X}\\
&= (\nabla^{\ast}\nabla\sigma)_{X} - 2\langle  \nabla_{e_i}X, e_j \rangle (\nabla\sigma)_{e_i, e_j} - \sigma_{R_{e_i, X}e_i}.
\end{align*}
In getting the last line we wrote $\nabla_{e_i}X = \langle \nabla_{e_i}X, e_j \rangle e_j$ and also used the fact that, when $X$ is a Killing vector field, we have
\begin{equation}\label{eq:Killing-curvature}
\nabla^{2}_{V, W}X = R_{V, X}W.
\end{equation}
(The identity~\eqref{eq:Killing-curvature} be frequently used in what follows, sometimes without further comment.) To continue, we use Proposition~\ref{prop:weitzenbock}(a) to replace $\nabla^{\ast}\nabla\sigma$ and also note that $\sigma_{R_{e_i, X}e_i} = -\sigma_{\Ric(X)}$. Then we obtain
\begin{align*}
d^{\ast}d(\sigma_{X}) = (\Delta\sigma)_{X} - F_{e_i, X}\sigma_{e_i}  - 2\langle  \nabla_{e_i}X, e_j \rangle (\nabla\sigma)_{e_i, e_j} .
\end{align*}
We get (a) upon noting that since $X$ is a Killing vector field, $\langle \nabla_{V}X, W \rangle$ is skew-symmetric in $V, W$, and hence we have
\begin{align*}
2\sum_{i, j}\langle  \nabla_{e_i}X, e_j \rangle (\nabla\sigma)_{e_i, e_j} &= 2\sum_{i < j}\langle \nabla_{e_i}X, e_j \rangle \big( (\nabla\sigma)_{e_i, e_j} - (\nabla\sigma)_{e_j, e_i} \big)=\\
&= 2\sum_{i < j}\langle \nabla_{e_i}X, e_j \rangle (d\sigma)_{e_i, e_j} = \langle dX^{\flat}, d\sigma \rangle.
\end{align*}
Of course in getting the last equality we used the fact that, at $p$, 
\[
(dX^{\flat})_{e_i, e_j} = \nabla_{e_i}(X^{\flat}(e_j)) - \nabla_{e_j}(X^{\flat}(e_i)) = 2\langle \nabla_{e_i}X, e_j \rangle.
\]

To see (b), we compute 
\begin{align*}
d^{\ast}(\iota_{X}\varphi) &= -\nabla_{e_i}(\varphi_{X, e_{i}})= -(\nabla\varphi)_{e_i, X, e_i} - \varphi_{\nabla_{e_i}X, e_i}\\
&= (\nabla\varphi)_{e_i, e_i, X} + \sum_{i, j}\langle \nabla_{e_i}X, e_j \rangle \varphi_{e_i, e_j}\\
&= -(d^{\ast} \varphi)_{X} + 2\sum_{i < j}\langle \nabla_{e_i}X, e_j \rangle \varphi_{e_i, e_j}\\
&= -(d^{\ast}\varphi)_{X} + \langle dX^{\flat}, \varphi \rangle.
\end{align*}

To see (c), we start by computing
\begin{align*}
(d^{\ast} d \iota_{X}\varphi)_{e_j} =& -\nabla_{e_i}\big( (d\iota_{X}\varphi)_{e_i, e_j} \big) = -\nabla_{e_i}\big( (\nabla\iota_X\varphi)_{e_i, e_j} - (\nabla\iota_{X}\varphi)_{e_j, e_i} \big)\\
=& -\nabla_{e_i}\big( (\nabla\varphi)_{e_i, X, e_j} + \varphi_{\nabla_{e_i}X, e_j}  - (\nabla\varphi)_{e_j, X, e_i} - \varphi_{\nabla_{e_j}X, e_i} \big)\\
=& (\nabla^{\ast}\nabla\varphi)_{X, e_j} - 2(\nabla\varphi)_{e_i, \nabla_{e_i}X, e_j} - \varphi_{\nabla^2_{e_i, e_i}X, e_j}\\
&+ (\nabla^{2}\varphi)_{e_i, e_j, X, e_i}+ (\nabla\varphi)_{e_j, \nabla_{e_i}X, e_i} + (\nabla\varphi)_{e_i, \nabla_{e_j}X, e_i} + \varphi_{\nabla^{2}_{e_i, e_j}X, e_i}\\
=& (\nabla^{\ast}\nabla\varphi)_{X, e_j} - 2(\nabla\varphi)_{e_i, \nabla_{e_i}X, e_j} + \varphi_{\Ric(X), e_j}\\
&+ (\nabla^{2}\varphi)_{e_i, e_j, X, e_i}+ (\nabla\varphi)_{e_j, \nabla_{e_i}X, e_i} + (\nabla\varphi)_{e_i, \nabla_{e_j}X, e_i} + \varphi_{\nabla^2_{e_i, e_j}X, e_i}.
\end{align*}
Here, in getting the last line we used again~\eqref{eq:Killing-curvature} to replace $\nabla^2_{e_i, e_i}X$ by $R_{e_i, X}e_i = -\Ric(X)$. Next, we have
\begin{align*}
(dd^{\ast}\iota_{X}\varphi)_{e_j}=& \nabla_{e_j}\big( -(\nabla\iota_{X}\varphi)_{e_i, e_i} \big)\\
=& -\nabla_{e_j}\big( (\nabla\varphi)_{e_i, X, e_i} + \varphi_{\nabla_{e_i}X, e_i} \big)\\
=& -(\nabla^{2}\varphi)_{e_j, e_i, X, e_i} - (\nabla\varphi)_{e_i, \nabla_{e_j}X, e_i} - (\nabla\varphi)_{e_j, \nabla_{e_i}X, e_i} - \varphi_{\nabla^{2}_{e_j, e_i}X, e_i}.
\end{align*}
Adding up the previous two computations, we observe that all terms involving $(\nabla\varphi)$ cancel, except for the term $-2(\nabla\varphi)_{e_i, \nabla_{e_i}X, e_j}$. Thus we get
\begin{align*}
(\Delta\iota_{X}\varphi)_{e_j} =& (\nabla^{\ast}\nabla\varphi)_{X, e_j} - 2(\nabla\varphi)_{e_i, \nabla_{e_i}X, e_j} + \varphi_{\Ric(X), e_j}\\
& + (\nabla^{2}\varphi)_{e_i, e_j, X, e_i}-(\nabla^{2}\varphi)_{e_j, e_i, X, e_i} + \varphi_{\nabla^{2}_{e_i, e_j}X, e_i} - \varphi_{\nabla^{2}_{e_j, e_i}X, e_i}\\
=& (\nabla^{\ast}\nabla\varphi)_{X, e_j} - 2(\nabla\varphi)_{e_i, \nabla_{e_i}X, e_j} + \varphi_{\Ric(X), e_j}\\
&- \varphi_{R_{e_i, e_j}X, e_i} - \varphi_{X, R_{e_i, e_j}e_i}+ \varphi_{R_{e_i, e_j}X, e_i}\\
=& (\nabla^{\ast}\nabla\varphi)_{X, e_j} - 2(\nabla\varphi)_{e_i, \nabla_{e_i}X, e_j} + \varphi_{\Ric(X), e_j}+ \varphi_{X, \Ric(e_j)}.
\end{align*}
We obtain exactly the formula in (c) upon replacing $(\nabla^{\ast}\nabla\varphi)_{X, e_j}$ using Proposition~\ref{prop:weitzenbock}(b) and cancelling the terms involving the Ricci curvature.

Finally, to see (d), recalling from the proof of (b) that 
\[
\langle dX^{\flat}, \varphi \rangle = \langle \nabla_{e_i}X, e_j \rangle \varphi_{e_i, e_j}= \varphi_{e_i,\nabla_{e_i}X},
\]
we may use (b) to compute
\begin{align*}
-dd^{\ast}(\iota_{X}\varphi)_{e_j} =& \nabla_{e_j}\big( (d^{\ast}\varphi)_{X} - \langle dX^{\flat}, \varphi \rangle \big)\\
=& (d\iota_{X}d^{\ast}\varphi)_{e_j} - \nabla_{e_j}(\varphi_{e_i, \nabla_{e_i}X})\\
=& (d\iota_{X}d^{\ast}\varphi)_{e_j} - (\nabla\varphi)_{e_j, e_i, \nabla_{e_i}X} - \varphi_{e_i, R_{e_j, X}e_i}\\
=& (d\iota_{X}d^{\ast}\varphi)_{e_j} - (d\varphi)_{e_j, e_i, \nabla_{e_i}X} + (\nabla\varphi)_{e_i, \nabla_{e_i}X, e_j}\\& + (\nabla\varphi)_{\nabla_{e_i}X, e_j, e_i} + \varphi_{e_i, R_{X, e_j}e_i}.
\end{align*}
To continue, we use the anti-symmetry of $\langle\nabla_{\cdot}X, \cdot\rangle$ to compute
\begin{align*}
(\nabla\varphi)_{e_i, \nabla_{e_i}X, e_j}+ (\nabla\varphi)_{\nabla_{e_i}X, e_j, e_i} =& \langle \nabla_{e_i}X,e_k \rangle \big( (\nabla\varphi)_{e_i, e_k, e_j} - (\nabla\varphi)_{e_k, e_i, e_j} \big)\\
=& 2\langle \nabla_{e_i}X, e_k \rangle(\nabla\varphi)_{e_i, e_k, e_j}\\
=& 2(\nabla\varphi)_{e_i, \nabla_{e_i}, e_j}.
\end{align*}
In summary we've obtained
\[
-dd^{\ast}(\iota_{X}\varphi)_{e_j} = (d\iota_{X}d^{\ast}\varphi)_{e_j} + (d\varphi)_{e_i, e_j, \nabla_{e_i}X} + 2(\nabla\varphi)_{e_i, \nabla_{e_i}X, e_j} + \varphi_{e_i, R_{X, e_j}e_i}.
\]
We complete the proof of (d) upon adding this to (c) and noting that $d\varphi = 0$ since $\Sigma$ is two-dimensional.
\end{proof}

\section{Review of some basic facts about the abelian Yang--Mills--Higgs and vortex equations}\label{sec:YMH-basics}
We begin by reviewing the first and second variation formulas of $E_{\ep}$. Let $\cC$ to be the set of configurations $(u, \nabla_A)$ where $u$ is a section of $L$ of class $L^{\infty} \cap W^{1, 2}$ and $\nabla_A = \nabla_0 - \sqrt{-1}A$ is a metric connection of class $W^{1, 2}$. (Recall that $\nabla_0$ is our fixed reference connection on $L$.) The latter means that the real $1$-form $A$ lies in $W^{1, 2}$. Note that we then have $F_A = F_{0} - \sqrt{-1}dA$.

Given $(u, \nabla_A) \in \cC$, and a pair $(v, a) \in \Omega^{0}(L) \times \Omega^{1}(\Sigma)$, recall that the first variation of $E_{\ep}$ is given by
\begin{align}
\delta E_{\ep}(u, \nabla_A)(v, a) :=& \frac{d}{dt}E_{\ep}(u + tv, \nabla_A - t\sqrt{-1}a)\nonumber\\
=& \int_{\Sigma} 2\ep^{2}\langle \sqrt{-1}F_A, da \rangle + 2\re\langle \nabla_A u, \nabla_A v  - \sqrt{-1}au \rangle + \frac{|u|^{2} - 1}{\ep^{2}}\re\langle u, v \rangle d\mu_{g} \label{eq:1st-variation}
\end{align}
Of course, $(u, \nabla_A)$ is a weak solution to~\eqref{eq:YMH} if and only if $\delta E_{\ep}(u, \nabla_A) = 0$. Moreover, any weak solution to~\eqref{eq:YMH} are locally gauge equivalent to a smooth solution. (See Proposition~\ref{prop:regularity} below for a more precise statement.) Next we recall the second variation formula for $E_{\ep}$, which for instance may be found in~\cite{GuSi}:
\begin{align}
\delta^{2} E_{\ep}(u, \nabla_A)(v, a) := &\frac{d^2}{dt^2}E_{\ep}(u + tv, \nabla_A - t\sqrt{-1}a)\nonumber\\
=& \int_{\Sigma} 2\ep^{2}|da|^{2} + 2|d_A v|^{2} - 4\langle a, u\times d_A v + v \times d_A u \rangle + 2|u|^{2}|a|^{2} \nonumber\\
&+ \frac{|u|^{2} - 1}{\ep^{2}}|v|^{2} + \frac{2(\re\langle u, v \rangle)^{2}}{\ep^{2}}d\mu_g \label{eq:2nd-variation}.
\end{align}
Here the cross product ``$\times$'' has the following meaning
\[
\xi \times \eta = \re\langle \sqrt{-1}\xi, \eta \rangle, \text{ for }\xi, \eta \in L_{x} \text{ and for all }x \in \Sigma.
\]
The following fact justifies the term ``cross product'':
\begin{equation}\label{eq:cross-product}
\re\langle  \sqrt{-1}\xi, \eta \rangle = -\re\langle \xi, \sqrt{-1}\eta \rangle = -\re\langle \sqrt{-1}\eta, \xi \rangle.
\end{equation}

Polarizing~\eqref{eq:2nd-variation} and then formally integrating by parts, we get the following Jacobi operators $J^{1}_{(u, \nabla_A)}, J^{2}_{(u, \nabla_A)}$, which already appeared for example in~\cite[Section 3]{GuSi}:
\begin{align}
J^{1}_{(u, \nabla_A)}(v, a) &:= d_{A}^{\ast}d_{A}v + 2\langle  a, \sqrt{-1}d_{A} u\rangle - (d^{\ast}a)\sqrt{-1}u + \frac{|u|^{2} - 1}{2\ep^{2}}v + \frac{\re\langle u, v \rangle}{\ep^{2}}u. \label{eq:1st-Jacobi}\\
J^{2}_{(u, \nabla_A)}(v, a) &:= \ep^{2}d^{\ast}da - u \times d_{A} v  - v \times d_{A}u + |u|^{2}a. \label{eq:2nd-Jacobi}
\end{align}
These have the property that when $u, \nabla, v$ and $a$ are sufficiently regular, we have
\[
\delta^2 E_{\ep}(u, \nabla_A)(v,a) = 2\int_{\Sigma}  \re\langle J^{1}_{(u, \nabla_A)}(v, a), v \rangle + \langle J^{2}_{(u, \nabla_A)}(v, a), a \rangle d\mu_g.
\]
\begin{defi}\label{defi:stable}
A solution $(u, \nabla_A) \in \cC$ to~\eqref{eq:YMH} is said to be \textit{stable} if $\delta^{2}E_{\ep}(u, \nabla_A)(v, a)$ as defined in~\eqref{eq:2nd-variation} is non-negative for any $(v, a) \in \Omega^{0}(L) \times \Omega^{1}(\Sigma)$. 
\end{defi}
\begin{rmk}\label{rmk:singular-variation}
Note that if $(u, \nabla_A)$ is stable, then in fact $\delta^{2}E_{\ep}(u, \nabla_A)(v, a)\geq 0$ even if $v$ is merely a section of $L$ of class $W^{1, 2} \cap L^{\infty}$, and $a$ is a $1$-form of class $W^{1,2}$. This follows by inspecting the integrands in~\eqref{eq:2nd-variation} and noting that $v$ and $a$ can be smoothly approximated, respectively, in the $W^{1,2} \cap L^{p}$ and $W^{1, 2}$ topology. (Here $p < \infty$ is arbitrary.) 
\end{rmk}

Next we give a more precise statement of the regularity of weak solutions mentioned before. The result is due to Taubes~\cite{Tau1}. Let $U \subset \Sigma$ be an open set, and assume that $L$ has a local, non-vanishing section over $U$, then we have a unitary trivialization of $L\vert_{U}$, under which sections are identified with complex-valued functions, and each metric connection can be written as $\nabla = d - \sqrt{-1}B$ for some real-valued $1$-form $B$ on $U$.
\begin{prop}[\cite{Tau1}, Proposition 4.1]
\label{prop:regularity}
Let $(u, \nabla) \in \cC$ be a weak solution to~\eqref{eq:YMH} on $\Sigma$ and let $U\subset \Sigma$ be a connected open set over which $L$ can be trivialized as above. Write $\nabla = d - \sqrt{-1}B$, and let $\theta \in W^{2, 2}(\Omega; \RR)$ be the unique solution to 
\begin{equation}
\left\{
\begin{array}{rl}
-d^{\ast}d\theta &= d^{\ast}B \text{ in }U,\\
\theta &= 0 \text{ on }\partial U.
\end{array}
\right.
\end{equation}
Then $(e^{\sqrt{-1}\theta}u, \nabla - \sqrt{-1}d\theta)$ is a smooth solution to~\eqref{eq:YMH} on $U$.
\end{prop}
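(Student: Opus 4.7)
The plan is to exploit the gauge invariance of $E_\ep$ recorded in~\eqref{eq:gauge-invariance} to place the connection into Coulomb gauge on $U$, and then to bootstrap the resulting determined elliptic system. First I would note that since $s := e^{\sqrt{-1}\theta}$ lies in $W^{2,2}(U;S^1)$, the gauged pair $(\tilde{u},\tilde{\nabla}) := (e^{\sqrt{-1}\theta}u,\ \nabla - \sqrt{-1}d\theta)$ lies in the class $\cC$ (using $W^{2,2}\hookrightarrow C^0$ in two dimensions to see $e^{\sqrt{-1}\theta}u\in L^\infty\cap W^{1,2}$). By~\eqref{eq:gauge-invariance} the functional satisfies $E_\ep(u,\nabla) = E_\ep(\tilde{u},\tilde{\nabla})$ for admissible variations supported in $U$, so $(\tilde{u},\tilde{\nabla})$ is still a weak solution to~\eqref{eq:YMH}. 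Writing $\tilde{\nabla} = d - \sqrt{-1}\tilde{B}$ with $\tilde{B} = B + d\theta$, the defining equation $-d^\ast d\theta = d^\ast B$ forces
\[
d^\ast \tilde{B} = d^\ast B + d^\ast d\theta = 0
\]
weakly on $U$; this is the Coulomb gauge condition.

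Next I would unravel~\eqref{eq:YMH} for $(\tilde{u},\tilde{B})$ in the trivialization, regarding $\tilde{u}$ as a complex-valued function and $\tilde{B}$ as a real $1$-form on $U$. The first line of~\eqref{eq:YMH} becomes a semilinear equation of the form $\Delta \tilde{u} = \tilde{F}(x,\tilde{u},\tilde{B},d\tilde{u},d\tilde{B})$, where $\tilde F$ is a polynomial in its arguments. Using $d^\ast \tilde B = 0$, the Hodge Laplacian on $\tilde B$ reduces to $\Delta \tilde{B} = d^\ast d\tilde{B}$, so the second line of~\eqref{eq:YMH} rewrites as a linear equation $\ep^2\Delta \tilde{B} = -\im\langle \tilde{u}, d_{\tilde{B}}\tilde{u}\rangle$. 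Together with the Coulomb condition these produce a determined, semilinear elliptic system for $(\tilde u, \tilde B)$ whose principal part is simply $\mathrm{diag}(\Delta,\Delta)$.

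The final step is a standard elliptic bootstrap. Starting from $\tilde u\in W^{1,2}\cap L^\infty$ and $\tilde B\in W^{1,2}$, the two-dimensional Sobolev embedding $W^{1,2}\hookrightarrow L^p_{loc}$ for every $p<\infty$ puts the right-hand sides of the $\Delta\tilde u$ and $\Delta\tilde B$ equations into $L^q_{loc}$ for some $q>2$; applying interior $W^{2,q}$ estimates upgrades $(\tilde u,\tilde B)$ into $C^{1,\alpha}_{loc}$. Iterating the argument with Schauder estimates on the now-continuous coefficients then yields smoothness of $(\tilde u,\tilde B)$ on $U$.

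The main obstacle I anticipate is purely a bookkeeping one: ensuring that after the gauge change the nominal PDE is in fact satisfied in the right weak sense. In particular, one must check that $\theta\in W^{2,2}$ suffices for $\tilde B\in W^{1,2}$ and for $e^{\sqrt{-1}\theta}u\in W^{1,2}\cap L^\infty$, so that $(\tilde u,\tilde\nabla)\in\cC$ and the first-variation identity~\eqref{eq:1st-variation} applies with test pairs $(v,a)$ compactly supported in $U$. Once this regularity of the gauge transform is verified, the Coulomb system is genuinely elliptic and the bootstrap is routine.
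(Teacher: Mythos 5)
The paper does not prove this proposition itself but cites it to Taubes (\cite{Tau1}, Proposition 4.1), and your Coulomb-gauge-plus-bootstrap argument is exactly the standard proof of that result, so your approach is correct and matches the intended one. One small quantitative point: on the first pass the term $\langle \tilde B, d\tilde u\rangle$ only lies in $L^q_{\loc}$ for $q<2$ (since $d\tilde u$ is merely $L^2$ while $\tilde B\in L^p$ for all $p<\infty$), so the iteration starts from $W^{2,q}$ with $q<2$ and climbs up via $W^{2,q}\hookrightarrow W^{1,2q/(2-q)}$ rather than landing above $L^2$ immediately; this does not affect the conclusion.
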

Based on Proposition~\ref{prop:regularity}, one can in fact show that a weak solution $(u, \nabla) \in \cC$ is gauge-equivalent to a smooth solution over all of $\Sigma$. This was pointed out to the author by the reviewer of~\cite{Che}. Specifically, we have
\begin{prop}\label{prop:global-regularity}
Let $(u, \nabla)$ be as in Proposition~\ref{prop:regularity}. There exists $\varphi \in W^{2, 2}(\Sigma; \RR)$ such that $(e^{\sqrt{-1}\varphi}u, \nabla - \sqrt{-1}d\varphi)$ is a smooth solution to~\eqref{eq:YMH} on $\Sigma$.
\end{prop}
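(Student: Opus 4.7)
The plan is to first place the connection $\nabla$ in global Coulomb gauge relative to the fixed background connection $\nabla_{0}$, and then apply Proposition~\ref{prop:regularity} on trivializing neighborhoods. The crucial observation is that global Coulomb gauge simplifies the right-hand side of the local gauge-fixing PDE appearing in Proposition~\ref{prop:regularity} to a smooth function, and this smoothness propagates back through the local gauge transformation to give smoothness of the configuration itself.

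First I would write $\nabla = \nabla_{0} - \sqrt{-1}A$ with $A \in W^{1,2}(\Sigma; T^{\ast}\Sigma)$ a real $1$-form. Note that $d^{\ast}A \in L^{2}(\Sigma)$ and $\int_{\Sigma} d^{\ast}A\, d\mu_{g} = 0$, by integration by parts against the constant function $1$. Standard elliptic theory on the closed surface $\Sigma$ then yields $\varphi \in W^{2,2}(\Sigma; \RR)$ solving $\Delta \varphi = -d^{\ast}A$, unique up to an additive constant. Setting $\tilde{A} := A + d\varphi$ gives $\tilde{A} \in W^{1,2}$ with $d^{\ast}\tilde{A} = 0$, and the configuration
\[
(\tilde{u}, \tilde{\nabla}) := (e^{\sqrt{-1}\varphi}u,\, \nabla - \sqrt{-1}d\varphi) = (e^{\sqrt{-1}\varphi}u,\, \nabla_{0} - \sqrt{-1}\tilde{A})
\]
lies in $\cC$ (the new section is still $L^{\infty}\cap W^{1,2}$ since $\varphi \in W^{2,2}$) and, by the gauge invariance~\eqref{eq:gauge-invariance}, remains a weak solution of~\eqref{eq:YMH}.

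Next I would establish that $(\tilde{u}, \tilde{\nabla})$ is smooth on $\Sigma$. Cover $\Sigma$ by open sets $U' \subset\subset U$ with $L|_{U}$ trivializable; in such a trivialization write $\nabla_{0}|_{U} = d - \sqrt{-1}B_{0}$ with $B_{0}$ smooth, so that $\tilde{\nabla}|_{U} = d - \sqrt{-1}\tilde{B}$ with $\tilde{B} := B_{0} + \tilde{A}|_{U}$. Applying Proposition~\ref{prop:regularity} to $(\tilde{u}, \tilde{\nabla})$ on $U$ produces $\theta_{U} \in W^{2,2}(U; \RR)$ solving $-d^{\ast}d\theta_{U} = d^{\ast}\tilde{B}$ with zero boundary data. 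The key point is that
\[
d^{\ast}\tilde{B} = d^{\ast}B_{0} + d^{\ast}\tilde{A}|_{U} = d^{\ast}B_{0},
\]
which is smooth on $U$ because $\nabla_{0}$ is smooth and $\tilde{A}$ is globally coclosed. Interior elliptic regularity then upgrades $\theta_{U}$ to a smooth function on $U'$. Since Proposition~\ref{prop:regularity} makes $(e^{\sqrt{-1}\theta_{U}}\tilde{u}, \tilde{\nabla} - \sqrt{-1}d\theta_{U})$ smooth on $U$, and $e^{\sqrt{-1}\theta_{U}}$ is now a smooth $U(1)$-valued function on $U'$, reversing this local gauge transformation shows that $(\tilde{u}, \tilde{\nabla})$ is itself smooth on $U'$.

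Taking a locally finite cover of $\Sigma$ by such $U'$ then yields smoothness of $(\tilde{u}, \tilde{\nabla})$ on all of $\Sigma$, so the $\varphi$ constructed in the second paragraph is the desired gauge function. The only non-routine ingredient is the observation that the global Coulomb condition $d^{\ast}\tilde{A} = 0$ causes $d^{\ast}\tilde{B}$ to collapse onto the smooth background term $d^{\ast}B_{0}$; once this is in place, the rest is standard linear elliptic bootstrap combined with the local regularity statement. I would expect any subtlety to be bookkeeping around the precise regularity classes when composing the two gauge transformations $e^{\sqrt{-1}\varphi}$ and $e^{\sqrt{-1}\theta_{U}}$, but since $\varphi \in W^{2,2}$ and $\theta_{U}$ is smooth on $U'$, this causes no trouble.
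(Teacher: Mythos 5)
Your argument is correct, but it is genuinely different from the one in the paper. The paper covers $\Sigma$ by exactly two trivializing open sets $U_1 = B_{4r}(p)$ and $U_2 = \Sigma\setminus\overline{B_r(p)}$, applies Proposition~\ref{prop:regularity} on each to get gauge functions $\theta_1,\theta_2$, observes that $d(\theta_1-\theta_2)$ is the difference of two smooth connection forms on the overlap (hence $\theta_1-\theta_2$ is smooth there), and glues via $\varphi=\zeta\theta_1+(1-\zeta)\theta_2$ with a cutoff $\zeta$. You instead perform a single global gauge transformation first, solving $\Delta\varphi=-d^{\ast}A$ on the closed surface (solvable since $\int_\Sigma d^{\ast}A\,d\mu_g=0$) to reach Coulomb gauge $d^{\ast}\tilde A=0$ relative to $\nabla_0$, and then show the Coulomb-gauged configuration is already smooth: in any local unitary trivialization the source term in the Dirichlet problem of Proposition~\ref{prop:regularity} collapses to the smooth quantity $d^{\ast}B_0$, so the local gauge function $\theta_U$ is smooth in the interior and can be undone without losing regularity. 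Both proofs are sound; the steps you might worry about (membership of $(e^{\sqrt{-1}\varphi}u,\nabla-\sqrt{-1}d\varphi)$ in $\cC$ when $\varphi\in W^{2,2}$ on a surface, locality of $d^{\ast}$, interior elliptic regularity with smooth right-hand side) all check out. Your route buys generality and conceptual clarity --- it identifies the global gauge explicitly as the Coulomb gauge and works on any closed manifold without needing a two-chart trivializing cover --- at the cost of invoking global elliptic solvability on $\Sigma$ and re-verifying that the intermediate configuration is an admissible weak solution. The paper's route is more hands-on: it uses only the local statement plus the elementary observation that transition functions between smooth gauges are smooth, with a cutoff doing the global bookkeeping.
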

\begin{proof}
Take any $p \in \Sigma$ and concentric geodesic balls $B_{r}(p) \subset B_{4r}(p)$. Define $U_1 = B_{4r}(p)$ and $U_2 = \Sigma \setminus \overline{B_{r}(p)}$. Then the line bundle $L$ is trivializable over both $U_1$ and $U_2$. ($U_1$ is contractible. Also, since we are on a compact surface, $U_2$ deformation retracts onto a wedge sum of finitely many circles, over which any complex line bundle is trivializable.) Hence we may apply Proposition~\ref{prop:regularity} to get $\theta_i \in W^{2, 2}(U_i; \RR)$ such that $(u_i, \nabla_i) := (e^{\sqrt{-1}\theta_i}u, \nabla - \sqrt{-1}d\theta_i)$ is smooth on $U_i$ for $i = 1, 2$. Now note that on $U_1 \cap U_2$, the $1$-form
\[
d(\theta_1 - \theta_2) = \sqrt{-1}\big[ (\nabla - \sqrt{-1}d\theta_1) - (\nabla - \sqrt{-1}d\theta_2) \big]
\]
is smooth, and hence the $W^{2,2}$-function $\theta_1 - \theta_2$, having distributional gradient that is smooth, is itself smooth on $U_1 \cap U_2$. To finish, let $\zeta \in C^{\infty}_{c}(B_{3r}(p))$ be a cut-off function which is identically $1$ on $B_{2r}(p)$, and define 
\[
\varphi = \zeta\theta_1 + (1 - \zeta) \theta_2,
\]
Then we check that $(\widetilde{u}, \widetilde{\nabla}) := (e^{\sqrt{-1}\varphi}u, \nabla - \sqrt{-1}d\varphi)$ is equal to $(u_1, \nabla_1)$ on $B_{2r}(p)$, and to $(u_2, \nabla_2)$ on $\Sigma \setminus B_{3r}(p)$. It remains to check that $(\widetilde{u}, \widetilde{\nabla})$ is smooth on $U_1 \cap U_2$. To see that, note that on $U_1 \cap U_2 = B_{4r}(p) \setminus \overline{B_{r}(p)}$, we have 
\[
(\widetilde{u}, \widetilde{\nabla}) = (e^{\sqrt{-1}\zeta (\theta_1 - \theta_2)}u_2, \nabla_2 - \sqrt{-1}d(\zeta(\theta_1 - \theta_2))),
\]
which is smooth on $U_1 \cap U_2$ since $(u_2, \nabla_2), \zeta$ and $\theta_1 - \theta_2$ all are.
\end{proof}

Thanks or Proposition~\ref{prop:global-regularity}, from now on we can just work with smooth solutions $(u, \nabla_A)$ instead of weak solutions. Below, we recall some pointwise identities which are valid for solutions of~\eqref{eq:YMH}. The proofs in the case $\Sigma = \RR^{2}$ and $\ep = 1$ are contained in~\cite[Chapter III.6]{JT}, and the general case requires no essential modification. For the reader's convenience we sketch the argument for some of the parts.
\begin{prop}\label{prop:YMH-identities}
Let $(u, \nabla_A)$ be a smooth solution to~\eqref{eq:YMH} on $\Sigma$. Then, with $h, f, \sigma$ and $\varphi$ defined as in the introduction, the following hold.
\begin{enumerate}
\item[(a)] $\Delta h = \frac{1}{\ep^{2}}|d_{A} u|^2 - \frac{1}{\ep^{2}}|u|^2 h$.
\vskip 1mm
\item[(b)] $\Delta f = \frac{1}{\ep^{2}}\ast (d_{A}u \times d_{A}u) - \frac{1}{\ep^{2}}|u|^2f$.
\vskip 1mm
\item[(c)] $\Delta_A d_{A}u = -\frac{1}{\ep^{2}}\langle d_{A}u, u \rangle u + \sqrt{-1}f (\ast d_{A}u) + h d_{A}u$.
\vskip 1mm
\item[(d)] $d^{\ast} \varphi = \frac{1}{\ep^{2}}\re\langle \sqrt{-1}u, \sigma \rangle$.
\vskip 1mm
\item[(e)] $d_{A}\sigma = -\varphi \sqrt{-1}u$.
\end{enumerate}
In (b), the $2$-form $d_{A}u \times d_{A}u$ is defined by
\[
(d_A u \times d_{A} u)_{V, W} = 2(d_{A}u)_{V}\times (d_{A}u)_{W} = 2\re\langle \sqrt{-1}(d_A u)_V, (d_A u)_W \rangle.
\]
\end{prop}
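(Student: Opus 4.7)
I view each of (a)--(e) as a pointwise identity that follows from the Euler--Lagrange system~\eqref{eq:YMH}, the Bianchi identity $d_A^{2}u = F_A u$, and the Hodge-star formalism on a surface. The calculations themselves are elementary; the only real difficulty is the bookkeeping of signs and $\sqrt{-1}$'s, caused by the combination of a $\sqrt{-1}\RR$-valued curvature, a conjugate-linear second slot in the Hermitian form, and the degree-dependent sign in $d^{\ast} = \pm \ast d\ast$. To tame this, I would work throughout in a local orthonormal frame $\{e_1,e_2\}$ that is parallel at the point of interest (so Christoffel contributions vanish), fix the substitution $F_A = -\sqrt{-1}f\,d\mu_g$ once and for all, and systematically apply the identities $\re\langle\sqrt{-1}\xi,\eta\rangle = -\im\langle\xi,\eta\rangle$ and $\re\langle u,v\rangle = \re\langle v,u\rangle$ to separate the real and imaginary parts of $\langle u,d_Au\rangle$.

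\textbf{Identities (a), (d), and (e).} For (a), differentiating $|u|^{2} = \langle u,u\rangle$ twice in the parallel frame yields $\Delta|u|^{2} = 2\re\langle \nabla_A^{\ast}\nabla_A u, u\rangle - 2|d_A u|^{2}$, and substituting the first equation of~\eqref{eq:YMH} gives the result, together with the by-product $dh = -\frac{1}{\ep^{2}}\re\langle d_A u, u\rangle$. For (d), expand $d^{\ast}\varphi = \sqrt{-1}d^{\ast}F_A + \ast dh$ (using $\varphi = \sqrt{-1}F_A - \ast h$ and $d^{\ast}\ast h = -\ast dh$), apply the second equation of~\eqref{eq:YMH} to the first summand, and apply the formula for $dh$ to the second; the sum is exactly $\frac{1}{\ep^{2}}\re\langle\sqrt{-1}u,\sigma\rangle$ once one uses $\sigma = d_A u - \sqrt{-1}\ast d_A u$. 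Identity (e) is the cleanest: write $d_A\sigma = d_A^{2}u - \sqrt{-1}d_A(\ast d_A u)$; the first term is $F_A u$ by Bianchi, and for the second the surface identity $d_A(\ast\beta) = -\ast d_A^{\ast}\beta$ for 1-forms $\beta$, combined with $d_A^{\ast}d_A u = \nabla_A^{\ast}\nabla_A u = hu$, gives $d_A(\ast d_A u) = -(\ast h)u$, so that $d_A\sigma = (F_A + \sqrt{-1}\ast h)u = -\sqrt{-1}\varphi u$.

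\textbf{Identities (b) and (c).} For (b), the point is that $\Delta$ commutes with $\ast$ and $dF_A = 0$, so $\Delta f = \ast dd^{\ast}(\sqrt{-1}F_A) = \frac{1}{\ep^{2}}\ast d(u\times d_A u)$ after substituting the second equation of~\eqref{eq:YMH}. Expanding $d(u\times d_A u)$ in the parallel frame by the Leibniz rule produces $(d_A u\times d_A u) + u\times (d_A^{2}u)$, and plugging in $d_A^{2}u = F_A u = -\sqrt{-1}fu\,d\mu_g$ shows $u\times (F_A u) = -f|u|^{2}d\mu_g$; applying $\ast$ gives (b). For (c), I start from $\Delta_A d_A u = d_A(d_A^{\ast}d_A u) + d_A^{\ast}(d_A^{2}u) = d_A(hu) + d_A^{\ast}(F_A u)$. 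The first piece expands as $dh\cdot u + hd_Au$, and using $d_A^{\ast} = -\ast d_A\ast$ on 2-forms the second becomes $\sqrt{-1}(\ast df)u + \sqrt{-1}f(\ast d_A u)$. It remains to show $dh + \sqrt{-1}\ast df = -\frac{1}{\ep^{2}}\langle d_A u, u\rangle$; the real part is the formula for $dh$ already found in (a), and the imaginary part is $\ast df = \frac{1}{\ep^{2}}\im\langle u, d_A u\rangle$, obtained by exactly the same manipulation used in (d). This produces the $u$-coefficient in (c) and completes the proof.
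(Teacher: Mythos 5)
Your proof is correct in all five parts, and for (d) and (e) it is essentially the computation the paper itself gives: expand $d^{\ast}\varphi$ using $d^{\ast}=-\ast d\ast$ and the second equation of~\eqref{eq:YMH}, and expand $d_A\sigma$ using $d_A^2u = F_Au$ together with $d_A\ast d_Au = -\ast d_A^{\ast}d_Au = -(\ast h)u$. The only real difference is in (a)--(c): the paper simply cites~\cite[Proposition III.6.1]{JT} for these, whereas you carry out the Bochner-type computations yourself. I checked your versions and they are sound --- in particular the key sign bookkeeping works out: $dh=-\frac{1}{\ep^2}\re\langle d_Au,u\rangle$ and $\ast df=\frac{1}{\ep^2}\im\langle u,d_Au\rangle$ combine to give exactly the coefficient $-\frac{1}{\ep^2}\langle d_Au,u\rangle$ in (c), and the Leibniz expansion $d(u\times d_Au)=d_Au\times d_Au+u\times(F_Au)$ with $u\times(F_Au)=-f|u|^2\,d\mu_g$ reproduces (b), consistent with the factor of $2$ built into the paper's definition of the $2$-form $d_Au\times d_Au$. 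One cosmetic point: the identity $d_A^2u=F_Au$ is the definition of curvature acting on sections, not the Bianchi identity (which is $d_AF_A=0$); the computation it feeds is nevertheless correct.
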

\begin{proof}
For the proofs of parts (a)(b)(c) we refer the reader to~\cite[Proposition III.6.1]{JT}. The remaining parts are immediate consequences of~\eqref{eq:YMH}, but we include the proofs for completeness. To get (d), we recall that, since $\Sigma$ is a surface, we have 
\begin{equation}\label{eq:d-star-surface}
d^{\ast} = -\ast d \ast \text{, for forms of any degree.}
\end{equation}
Now we have
\begin{align*}
d^{\ast}\varphi &= d^{\ast}\sqrt{-1}F_{A} - d^{\ast}\ast h\\
&= \frac{1}{\ep^{2}}\re\langle \sqrt{-1}u, d_{A}u \rangle + \ast d \ast \ast h\\
&= \frac{1}{\ep^{2}}\re\langle \sqrt{-1}u, d_{A}u \rangle + \ast dh\\
&= \frac{1}{\ep^{2}}\re\langle \sqrt{-1}u, d_{A}u \rangle - \ast \frac{1}{2\ep^{2}}\big( \langle u, d_A u \rangle + \langle d_A u, u \rangle \big).
\end{align*}
In getting the second line we used~\eqref{eq:YMH} and~\eqref{eq:d-star-surface}. To continue, we have
\begin{align*}
&\frac{1}{\ep^{2}}\re\langle \sqrt{-1}u, d_{A}u \rangle - \ast \frac{1}{2\ep^{2}}\big( \langle u, d_A u \rangle + \langle d_A u, u \rangle \big)\\
=& \frac{1}{2\ep^{2}}\big( \langle \sqrt{-1}u, d_{A}u \rangle + \langle d_{A}u, \sqrt{-1}u \rangle \big) - \frac{1}{2\ep^{2}}\big( \langle u, \ast d_A u \rangle + \langle \ast d_A u, u \rangle \big)\\
=& \frac{1}{2\ep^{2}}\big( \langle \sqrt{-1}u, d_{A}u \rangle + \langle d_{A}u, \sqrt{-1}u \rangle \big) - \frac{1}{2\ep^{2}}\big( \langle \sqrt{-1} u, \sqrt{-1}\ast d_A u \rangle + \langle \sqrt{-1}\ast d_A u, \sqrt{-1}u \rangle \big).
\end{align*}
The last line follows because the metric on $L$ is Hermitian. Combining the two strings of computations above, simplifying, and recalling the definition of $\sigma$, we get
\[
d^{\ast}\varphi = \frac{1}{2\ep^2}\big( \langle \sqrt{-1}u, \sigma \rangle + \langle \sigma, \sqrt{-1}u \rangle \big),
\]
which is exactly what we want to prove.

To prove (e), note that
\begin{align*}
d_{A}\sigma&= d_{A}\big( d_A u - \sqrt{-1}\ast d_A u \big)\\
&= F_A u - \sqrt{-1} d_A \ast d_A u\\
&= F_A u + \sqrt{-1}\ast d_A^{\ast}d_A u,
\end{align*}
where, as in the proof of (d), in the last line we used the fact that $d_A^{\ast} = -\ast d_A \ast$. Using~\eqref{eq:YMH}, we may continue the computation
\[
F_A u + \sqrt{-1}\ast d_A^{\ast}d_A u = F_A u + \sqrt{-1}(\ast h) u = -\sqrt{-1}(\sqrt{-1}F_A - \ast h)u,
\]
and (e) is proved.
\end{proof}
From the identities above we deduce the following properties for solutions to~\eqref{eq:YMH} which help us detect when they are solutions to the vortex equations. 
\begin{prop}
\label{prop:estimates}
Let $(u, \nabla_A)$ be a smooth solution to~\eqref{eq:YMH} on $\Sigma$ with $u \not\equiv 0$. Then 
\begin{enumerate}
\item[(a)] (See also~\cite[Lemma III.8.4]{JT}) We have $\pm f \leq h$. Moreover, if equality is achieved at some point on $\Sigma$ then the two sides are identically equal on $\Sigma$.
\vskip 1mm
\item[(b)] (See also~\cite[p.97]{JT}) $f = h$ ($f = -h$, resp.) if and only if $d_A u = \sqrt{-1}\ast d_A u$ ($d_A u = -\sqrt{-1}\ast d_A u$, resp.). 
\end{enumerate}
\end{prop}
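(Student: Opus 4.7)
The plan is to use Proposition~\ref{prop:YMH-identities}(a)(b) to express $\Delta(h \mp f)$ in a form that exposes the norm of $\sigma^{\mp} := d_A u \mp \sqrt{-1}\ast d_A u$ (so $\sigma^{-} = \sigma$ as defined in the introduction), and then apply a maximum-principle argument. The linchpin is the pointwise identity
\[
|\sigma^{\mp}|^2 \;=\; 2\bigl(|d_A u|^2 \mp \ast(d_A u \times d_A u)\bigr),
\]
which I would verify by direct expansion in a positively oriented local orthonormal frame $\{e_1, e_2\}$, using $(\ast d_A u)_{e_1} = -(d_A u)_{e_2}$, $(\ast d_A u)_{e_2} = (d_A u)_{e_1}$, the definition of the cross product, and the Hermitian convention on $L$. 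Combined with Proposition~\ref{prop:YMH-identities}(a)(b), this yields the scalar equation
\begin{equation}\label{eq:plan-key}
\Delta(h \mp f) \;+\; \frac{|u|^{2}}{\ep^{2}}(h \mp f) \;=\; \frac{|\sigma^{\mp}|^{2}}{2\ep^{2}} \quad \text{on } \Sigma,
\end{equation}
whose right-hand side is manifestly non-negative.

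For part (a), setting $w = h \mp f$, equation~\eqref{eq:plan-key} makes $w$ a supersolution (in the paper's sign convention for $\Delta$) of a Schr\"odinger operator with nonnegative potential $|u|^{2}/\ep^{2}$; equivalently $-w$ satisfies the corresponding subsolution estimate in standard form. I would then apply the strong maximum principle on the closed connected surface $\Sigma$ twice. First, to prove $w \geq 0$: if $w < 0$ somewhere, $-w$ would attain a strictly positive interior maximum, forcing $w$ to be a negative constant; feeding this back into~\eqref{eq:plan-key} gives $|u|^{2} w \equiv |\sigma^{\mp}|^{2}/2 \geq 0$, contradicting the hypothesis $u \not\equiv 0$. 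Second, if $w$ vanishes at some $q$, then $-w$ attains the nonnegative maximum $0$ at $q$, and the strong maximum principle yields $-w \equiv 0$, so the equality $\pm f = h$ propagates to all of $\Sigma$.

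For part (b), the implication ``$f = h \Rightarrow d_A u = \sqrt{-1}\ast d_A u$'' is immediate from~\eqref{eq:plan-key}: if the left-hand side vanishes identically then so does $|\sigma|^{2}$. Conversely, if $\sigma \equiv 0$ then~\eqref{eq:plan-key} with the lower sign becomes the homogeneous equation $\Delta(h - f) + \tfrac{|u|^{2}}{\ep^{2}}(h - f) = 0$, with $h - f \geq 0$ already established in (a). A repetition of the strong-maximum argument above (this time ruling out a strictly positive constant solution via $u \not\equiv 0$) forces $h \equiv f$. The statement involving $-h$ is obtained by running the same argument with $\sigma^{+}$ in place of $\sigma$.

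The main technical obstacle I anticipate is the algebraic identity for $|\sigma^{\mp}|^{2}$: the interplay of the real-bilinear cross product, the Hermitian inner product on $L$, and the Hodge star acting on $L$-valued $1$-forms requires careful sign tracking, and one must reconcile the paper's convention $\Delta = d^{\ast}d + dd^{\ast}$ with the standard form of the strong maximum principle before applying the latter. Once this bookkeeping is settled, the maximum-principle arguments themselves are standard, with the assumption $u \not\equiv 0$ entering at exactly the two places noted above to rule out constant super/subsolutions.
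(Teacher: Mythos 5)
Your proposal is correct and follows essentially the same route as the paper: the same Schr\"odinger-type identity $\Delta(h\mp f)+\ep^{-2}|u|^{2}(h\mp f)=(2\ep^{2})^{-1}|\sigma^{\mp}|^{2}$ obtained from Proposition~\ref{prop:YMH-identities}(a)(b) together with the splitting identities for $|d_Au\mp\sqrt{-1}\ast d_Au|^{2}$, followed by maximum-principle arguments in which $u\not\equiv 0$ is used exactly to rule out nonzero constant solutions. The only immaterial difference is that the paper proves $\pm f\leq h$ by multiplying the differential inequality by $(\pm f-h)_{+}$ and integrating by parts, whereas you invoke the strong maximum principle pointwise for that step as well.
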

\begin{proof}
As in the proof of~\cite[Lemma III.8.4]{JT}, from Proposition~\ref{prop:YMH-identities}(a)(b) we deduce that
\[
\Delta(\pm f - h) + \frac{|u|^{2}}{\ep^{2}}(\pm f - h)\leq 0.
\]
The first conclusion of part (a) now follows immediately from multiplying both sides with $(\pm f - h)_{+}$, integrating by parts, and recalling that $u \not\equiv 0$. The second conclusion then follows from the strong maximum principle.

For part (b), we recall the following two additional identities.
\begin{align}
|d_{A}u|^{2} &= \left| \frac{d_{A}u - \sqrt{-1}\ast d_{A}u}{2} \right|^{2}  + \left| \frac{d_{A}u + \sqrt{-1}\ast d_{A}u}{2} \right|^{2}. \label{eq:splitting-sum}\\
\ast (d_{A}u \times d_{A}u) &= \left| \frac{d_{A}u + \sqrt{-1}\ast d_{A}u}{2} \right|^{2}  - \left| \frac{d_{A}u - \sqrt{-1}\ast d_{A}u}{2} \right|^{2}. \label{eq:splitting-difference}
\end{align}
Both can be easily verified by direct computation. For instance, to get~\eqref{eq:splitting-difference}, we expand the right-hand side to get
\begin{align*}
\left| \frac{d_{A}u + \sqrt{-1}\ast d_{A}u}{2} \right|^{2}  - \left| \frac{d_{A}u - \sqrt{-1}\ast d_{A}u}{2} \right|^{2} &= \re \langle  d_A u, \sqrt{-1}\ast d_A u\rangle
\end{align*}
Fixing $p \in \Sigma$ and letting $e_1, e_2$ be an orthonormal basis for $T_{p}\Sigma$, we have
\[
(\ast d_A u)_{e_1} = -(d_A u)_{e_2},\ (\ast d_A u)_{e_2} = (d_A u)_{e_1}.
\]
Hence we find that 
\begin{align*}
\re\langle d_A u, \sqrt{-1}\ast d_A u \rangle &= -\re\langle (d_A u)_{e_1}, \sqrt{-1}(d_A u)_{e_2} \rangle + \re\langle (d_A u)_{e_2}, \sqrt{-1}(d_A u)_{e_1} \rangle\\
&= 2\re\langle (d_A u)_{e_1}, \sqrt{-1}(d_A u)_{e_2} \rangle\\
&= (d_A u \times d_A u)_{e_1, e_2} = \ast (d_A u \times d_A u),
\end{align*}
where in getting the second line we used~\eqref{eq:cross-product}. This proves~\eqref{eq:splitting-difference}.

Continuing with part (b), that $f = h$ implies $d_A u = \sqrt{-1}\ast d_A u$ now follows, as in p.97 of~\cite{JT}, from Proposition~\ref{prop:YMH-identities}(a)(b) along with~\eqref{eq:splitting-sum} and~\eqref{eq:splitting-difference}. For the converse, assume $d_A u = \sqrt{-1}\ast d_A u$. Then by~\eqref{eq:splitting-sum} and~\eqref{eq:splitting-difference} we have $|d_A u|^2 = \ast (d_A u \times d_A u)$, which by Proposition~\ref{prop:YMH-identities} implies that
\[
\big( \Delta + \frac{|u|^2}{\ep^2}\big) (f - h) = 0.
\]
Recall by part (a) that $f - h \leq 0$, and hence we get $\Delta (f - h) \geq 0$. Since $\Sigma$ is closed, this means $f-  h$ is constant, and hence
\[
|u|^2(f - h)  = 0,
\]
which forces $f - h$ to vanish identically since, by assumption, $|u|$ is not identically zero.
\end{proof}

For the reader's convenience, we close this section by briefly recalling how to derive~\eqref{eq:Bogol} for closed $\Sigma$. For simplicity we assume $\ep = 1$, as the computation is the same for other cases. The first equality of~\eqref{eq:Bogol} is straightforward. As for the second equality, it suffices to show that 
\[
\int_{\Sigma}\frac{1}{4}\big| d_A u + \sqrt{-1}\ast d_A u \big| + \frac{1}{2}|f + h|^2 d\mu_g - \int_{\Sigma}\frac{1}{4}\big| d_A u - \sqrt{-1}\ast d_A u \big| + \frac{1}{2}|f - h|^2 d\mu_g = \int_{\Sigma}\sqrt{-1}F_A .
\]
To that end, note that
\begin{equation}\label{eq:Bogol-expand}
\frac{1}{2}\int_{\Sigma} | f + h |^{2} -\frac{1}{2}\int_{\Sigma} | f - h |^{2}  d\mu_{g} = \int_{\Sigma} (1 - |u|^2)\sqrt{-1}F_A
\end{equation}
Combining this with~\eqref{eq:splitting-difference} gives
\begin{align}
&\int_{\Sigma}\frac{1}{4}\big| d_A u + \sqrt{-1}\ast d_A u \big| + \frac{1}{2}|f + h|^2 d\mu_g - \int_{\Sigma}\frac{1}{4}\big| d_A u - \sqrt{-1}\ast d_A u \big| + \frac{1}{2}|f - h|^2 d\mu_g\nonumber \\
= & \int_{\Sigma} d_A u \times d_A u - \sqrt{-1}|u|^2 F_A + \int_{\Sigma}\sqrt{-1}F_A \nonumber\\
= &\int_{\Sigma} d(u \times d_A u) + \int_{\Sigma}\sqrt{-1}F_A = \int_{\Sigma}\sqrt{-1}F_A. \label{eq:Bogol-difference}
\end{align}
In the second-to-last equality we used the identity
\[
d(u \times d_A u) = d_A u \times d_A u - \sqrt{-1}|u|^2 F_A,
\]
and the last equality follows because $\Sigma$ is closed by assumption.

\section{Smooth stable solutions of the abelian Yang--Mills--Higgs equations}\label{sec:stable-smooth}
Throughout this section we assume that $\Sigma$ is either the round $S^2$ or a flat $T^{2}$, and that $(u, \nabla_A)$ is a smooth solution to~\eqref{eq:YMH} on all of $\Sigma$. Moreover, we write $h, f, \sigma$ and $\varphi$, respectively, for $h_{(u, \nabla_A)}, f_{(u, \nabla_A)}, \sigma_{(u, \nabla_A)}$ and $\varphi_{(u, \nabla)}$, whose definitions we recall below.
\begin{align*}
h_{(u, \nabla_A)} &= \frac{1 - |u|^{2}}{2\ep^{2}},\\
f_{(u, \nabla_A)} &= \ast \sqrt{-1}F_{A},\\
\sigma_{(u, \nabla_A)} &= d_A u - \sqrt{-1}\ast d_A u,\\
\varphi_{(u, \nabla_A)} &= \sqrt{-1}F_A - \ast h = \ast (f - h).
\end{align*}
Next, we define the following real quadratic form defined over the space of smooth vector fields $X$ on $\Sigma$.
\[
Q(X) := \delta^{2}E_{\ep}(u, \nabla_A)(\sigma_{X}, \iota_{X}\varphi).
\]
As in \cite[Section 10]{BL}, the key step to proving Theorem~\ref{thm:main} consists in computing the trace of $Q$ restricted to the space $\cK$ of Killing vector fields. We begin with the following lemma.
\begin{lemm}\label{lemm:Killing-contraction} 
Let $X \in \cK$. Then the following hold.
\begin{enumerate}
\item[(a)] $J^{1}_{(u, \nabla_A)}(\sigma_{X}, \iota_{X}\varphi) = -2f(d_{A}u)_{X} + 2h\sqrt{-1}(\ast d_{A}u)_{X}$.
\vskip 1mm
\item[(b)] $J^{2}_{(u, \nabla_A)}(\sigma_{X}, \iota_{X}\varphi) = -|d_{A}u|^{2}\iota_{X}(\ast 1) - 2\re\langle (\ast d_{A}u)_{X}, d_{A}u \rangle$.
\end{enumerate}
\end{lemm}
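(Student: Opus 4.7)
Both identities follow from direct substitution into the Jacobi operator formulas~\eqref{eq:1st-Jacobi} and~\eqref{eq:2nd-Jacobi}, followed by systematic simplification using the Weitzenb\"ock results of Section~\ref{sec:weitz} together with the pointwise identities of Proposition~\ref{prop:YMH-identities}. The guiding observation is that, since $X$ is Killing and $\Sigma$ is two-dimensional, every auxiliary term produced during the computation will either carry a factor of $\langle\nabla_{\cdot}X,\cdot\rangle$ (which is anti-symmetric) contracted against something symmetric, or will involve Ricci-type contributions that cancel in pairs. Only the expressions stated in the lemma should survive.

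For part (a), the plan is to expand each of the five terms in $J^{1}_{(u,\nabla_A)}(\sigma_X, \iota_X\varphi)$. The Laplacian term is handled by Lemma~\ref{lemm:contraction}(a), which writes $d_A^{\ast}d_A\sigma_X$ as $(\Delta_A\sigma)_X$ minus $F_{e_i,X}\sigma_{e_i}$ and $\langle dX^{\flat}, d_A\sigma\rangle$; this last term collapses by Proposition~\ref{prop:YMH-identities}(e) to $-\sqrt{-1}\langle dX^{\flat},\varphi\rangle u$. To compute $(\Delta_A\sigma)_X$ itself, I would write $\sigma = d_A u - \sqrt{-1}\ast d_A u$, apply Proposition~\ref{prop:YMH-identities}(c) to $\Delta_A d_A u$, and handle $\Delta_A(\ast d_A u)$ via the commutation $\ast\Delta_A = \Delta_A\ast$ on $\Omega^{1}(L)$ (valid on a surface since $\ast$ is parallel and $\ast^2=-1$). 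The term $-(d^{\ast}\iota_X\varphi)\sqrt{-1}u$ reduces via Lemma~\ref{lemm:contraction}(b) combined with Proposition~\ref{prop:YMH-identities}(d), while the pairing term $2\langle\iota_X\varphi,\sqrt{-1}d_Au\rangle$ and the mass terms are purely algebraic (the latter collapse by the definition of $h$). After collecting, the anti-symmetry of $\langle\nabla_{\cdot}X,\cdot\rangle$ together with the scalar structure of $F_A$ on a surface forces all auxiliary contributions to cancel, leaving $-2f(d_A u)_X + 2h\sqrt{-1}(\ast d_A u)_X$.

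For part (b), the approach is entirely analogous. The term $\ep^{2}d^{\ast}d(\iota_X\varphi)$ is reduced by Lemma~\ref{lemm:contraction}(d) to $(\Delta\varphi)(X,\cdot) + d\iota_X d^{\ast}\varphi$. Writing $\varphi = \ast(f-h)$ and using $\ast\Delta=\Delta\ast$ on a surface, I would compute $\Delta\varphi$ from Proposition~\ref{prop:YMH-identities}(a)(b), and the splitting identities~\eqref{eq:splitting-sum}--\eqref{eq:splitting-difference} then give $\Delta(f-h) = -\tfrac{|\sigma|^{2}}{2\ep^{2}} - \tfrac{|u|^{2}}{\ep^{2}}(f-h)$. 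The term $d\iota_X d^{\ast}\varphi$ becomes, by Proposition~\ref{prop:YMH-identities}(d) and~\eqref{eq:cross-product}, proportional to $d(u\times\sigma_X)$; the Leibniz rule produces derivatives of $u$ and $\sigma_X$ that pair naturally with the cross-product terms $-u\times d_A\sigma_X-\sigma_X\times d_A u$ in $J^2$. Expanding these in a local orthonormal frame, using the anti-holomorphic identity $\ast\sigma = \sqrt{-1}\sigma$ (a direct consequence of $\ast^2=-1$ on $\Omega^{1}$), and combining with $|u|^{2}\iota_X\varphi$ should yield $-|d_A u|^{2}\iota_X(\ast 1) - 2\re\langle(\ast d_A u)_X, d_A u\rangle$.

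The principal obstacle will be bookkeeping: signs, factors of $2$, and the careful distinction between $\ast$ acting on scalar-valued forms versus $L$-valued ones. A strong internal consistency check is that no Killing-field data ($dX^{\flat}$, $\nabla_{\cdot}X$, or Ricci-type quantities) survives on the right-hand sides; this reflects the fact that the identities are designed precisely so that the quadratic form $Q$ has a tractable structure once traced over $\cK$ in the next step.
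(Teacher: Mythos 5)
Your plan is correct and follows essentially the same route as the paper's proof: both parts are obtained by substituting into \eqref{eq:1st-Jacobi}--\eqref{eq:2nd-Jacobi}, invoking Lemma~\ref{lemm:contraction}(a),(b),(d) together with Proposition~\ref{prop:YMH-identities}(c),(d),(e) and the commutation of $\ast$ with the Laplacians, and observing that the $\langle dX^{\flat},\cdot\rangle$ terms cancel precisely because $d_A\sigma+\varphi\sqrt{-1}u=0$. The only (harmless) deviations are cosmetic regroupings, e.g.\ collapsing $\ast(d_Au\times d_Au)-|d_Au|^2$ to $-\tfrac{1}{2}|\sigma|^2$ early rather than keeping the $2$-form $d_Au\times d_Au$ to pair against the Leibniz terms as the paper does.
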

\begin{proof}
We start with (a). First, by Proposition~\ref{prop:YMH-identities}(c) and the fact that $\sqrt{-1}\ast$ commutes with $\Delta_A$, we have
\[
\Delta_A \sigma = -\frac{1}{\ep^{2}}\langle \sigma , u \rangle u + h\sigma + f \sqrt{-1}\ast \sigma, 
\]
and consequently by Lemma~\ref{lemm:contraction}(a) we see that 
\begin{align}
\Delta_{A}(\sigma_{X}) &= -\frac{1}{\ep^{2}}\langle \sigma_X, u \rangle u + h\sigma_X +f (\sqrt{-1}\ast \sigma)_{X} -(F_{A})_{e_i, X}\sigma_{e_i} - \langle dX^{\flat}, d_A\sigma \rangle \nonumber \\
&= -\frac{1}{\ep^{2}}\langle \sigma_X, u \rangle u + h\sigma_X +2f (\sqrt{-1} \ast\sigma)_{X} - \langle dX^{\flat}, d_A\sigma \rangle \label{eq:Deltasigma},
\end{align}
where in getting the second line we used the fact that 
\[
f \sqrt{-1}(\ast \sigma) = -(F_{A})_{e_1, e_2} (\ast \sigma) =  - (F_A)_{e_i,\ \cdot\ }\sigma_{e_i}.
\]
Adding $\frac{|u|^{2} - 1}{2\ep^{2}}\sigma_{X} + \frac{\re\langle u, \sigma_{X} \rangle}{\ep^{2}}u$ to both sides of~\eqref{eq:Deltasigma} gives
\begin{align*}
&\Delta_{A}(\sigma_{X}) - h\sigma_{X} + \frac{\re\langle u, \sigma_{X} \rangle}{\ep^{2}}u\\
=& -\frac{1}{\ep^{2}}\langle \sigma_X, u \rangle u +\frac{1}{2\ep^{2}}\big( \langle u, \sigma_{X} \rangle + \langle \sigma_{X}, u \rangle \big)u + 2f (\sqrt{-1}\ast \sigma)_{X}  - \langle dX^{\flat}, d_A\sigma \rangle\\
=& \frac{1}{2\ep^{2}}\big( \langle u, \sigma_{X} \rangle - \langle \sigma_{X}, u \rangle \big)u + 2f (\sqrt{-1}\ast\sigma)_{X} - \langle dX^{\flat}, d_A\sigma \rangle.
\end{align*}
To see what $J^{1}_{(u, \nabla_A)}(\sigma_{X}, \iota_{X}\varphi)$ actually is, we still need to compute $2\langle \iota_{X}\varphi, \sqrt{-1}d_{A}u \rangle - (d^{\ast}\iota_{X}\varphi)\sqrt{-1}u$. To that end, we recall Proposition~\ref{prop:YMH-identities}(d), which together with Lemma~\ref{lemm:contraction} gives
\begin{align}
d^{\ast}\varphi & = \frac{1}{\ep^{2}}\re\langle \sqrt{-1}u, \sigma \rangle = \frac{\sqrt{-1}}{2\ep^2}\big(\langle u, \sigma \rangle - \langle \sigma, u \rangle \big).\\
\Longrightarrow& -(d^{\ast}\iota_{X}\varphi)\sqrt{-1}u= \frac{1}{2\ep^{2}}\big( \langle \sigma_{X}, u \rangle - \langle u, \sigma_{X} \rangle \big) u - \langle dX^{\flat}, \varphi \rangle \sqrt{-1}u.
\end{align}
Putting everything together, we arrive at 
\begin{align}\label{eq:1st-Jac-almost}
J^{1}_{(u, \nabla_A)}(\sigma_{X}, \iota_{X}\varphi) = 2f(\sqrt{-1}\ast \sigma)_{X} - \langle dX^{\flat}, d_A\sigma + \varphi\sqrt{-1}u \rangle + 2\langle \iota_{X}\varphi, \sqrt{-1}d_{A}u \rangle.
\end{align}
To finish the proof of (a), we note that the second term on the right-hand side vanishes because of Proposition~\ref{prop:YMH-identities}(e). Furthermore, picking an orthonormal frame $\{e_{i}\}$ on $\Sigma$, we compute
\begin{align*}
\langle \iota_{X}\varphi, \sqrt{-1}d_{A}u \rangle &= \varphi_{X, e_{i}}\sqrt{-1}(d_{A}u)_{e_i} = (f - h)(\ast 1)_{X, e_{i}}\sqrt{-1}(d_{A}u)_{e_i}\\
&= -\sqrt{-1}(f - h)(\ast d_{A}u)_{X}.
\end{align*}
Substituting this back into~\eqref{eq:1st-Jac-almost}, recalling the definition of $\sigma$ and observing a cancellation, we obtain
\[
J^{1}_{(u, \nabla_A)}(\sigma_X, \iota_X\varphi) = 2\sqrt{-1}h (\ast d_{A}u)_{X} - 2f (d_{A}u)_{X},
\]
as asserted. 

To prove (b), we first note by Proposition~\ref{prop:YMH-identities}(a)(b) we have
\begin{equation*}
\Delta\varphi = \ast \Delta (f - h) = \frac{1}{\ep^{2}}d_{A}u \times d_{A}u - \frac{|d_{A}u|^{2}}{\ep^{2}}(\ast 1) - \frac{|u|^{2}}{\ep^{2}}\varphi.
\end{equation*}
Thus by Lemma~\ref{lemm:contraction}(d), we have
\begin{align}\label{eq:2nd-Jac-almost}
&\ep^{2}(d^{\ast} d\iota_{X}\varphi)_{e_{j}} + |u|^{2}(\iota_{X}\varphi)_{e_j}\nonumber \\
=& (d_{A}u\times d_{A}u)_{X, e_{j}} - |d_{A}u|^{2}(\ast 1)_{X, e_{j}} + \ep^{2}(d\iota_{X}d^{\ast}\varphi)_{e_j}.
\end{align}
By Proposition~\ref{prop:YMH-identities}(d) and the Leibniz rule, we compute
\[
\ep^{2}(d\iota_{X}d^{\ast}\varphi)_{e_j}= \re\langle \sqrt{-1}(d_{A}u)_{e_j}, \sigma_X \rangle + \re\langle \sqrt{-1}u, (d_A(\sigma_X))_{e_j} \rangle.
\]
Plugging this back into~\eqref{eq:2nd-Jac-almost} and combining $\re\langle \sqrt{-1}(d_{A}u)_{e_j}, \sigma_X \rangle$ with $(d_{A}u\times d_{A}u)_{X, e_{j}}$, we get
\begin{align*}
&\ep^{2}(d^{\ast} d\iota_{X}\varphi)_{e_{j}} + |u|^{2}(\iota_{X}\varphi)_{e_j}\nonumber \\
=& \re\langle \sqrt{-1}(d_{A}u + \sqrt{-1}\ast d_{A}u)_{X}, (d_{A}u)_{e_j} \rangle - |d_{A}u|^{2}(\ast 1)_{X, e_j}+ \re\langle \sqrt{-1}u, (d_A (\sigma_{X}))_{e_j} \rangle.
\end{align*}
Next, noting that
\[
(u\times d_A(\sigma_{X}) + \sigma_{X} \times d_{A}u)_{e_j} = \re\langle \sqrt{-1}u, (d_A(\sigma_{X}))_{e_j} \rangle + \re\langle \sqrt{-1}\sigma_{X}, (d_{A}u)_{e_j} \rangle,
\]
and recalling~\eqref{eq:2nd-Jacobi}, we arrive at the formula asserted in (b). Namely,
\begin{align*}
&\Big(J^{2}_{(u, \nabla_A)}(\sigma_X, \iota_X\varphi)\Big)_{e_j} = -2\re\langle (\ast d_{A}u)_{X}, (d_{A}u)_{e_j} \rangle - |d_{A}u|^{2}(\ast 1)_{X, e_j}.
\end{align*}
The prof of Lemma~\ref{lemm:Killing-contraction} is now complete.
\end{proof}
Now we'd like to use Lemma~\ref{lemm:Killing-contraction} to compute the trace over $\cK$ of the quadratic form $Q$ defined at the beginning of the section, and hence we need to fix an appropriate inner product on $\cK$. To that end we use the fact that if $\Sigma = S^2$ or $T^2$, an inner product on $\cK$ can be chosen so that at each $x \in \Sigma$ there exists an orthonormal basis $X_1, \cdots, X_q$ of $\cK$ such that $X_1(x), X_2(x)$ form an orthonormal basis for $T_{x}\Sigma$, and $X_i(x) = 0$ for $i > 2$. (Here $q = 3$ for $S^2$ and $q = 2$ for $T^2$). The choice of inner products is as follows: if $\Sigma = T^2$, then $\cK$ consists of the parallel vector fields, and we take 
\[
(V, W)_{\cK} := \langle V(x), W(x)\rangle \text{ for }V, W \in \cK,
\]
the choice of $x \in T^2$ being irrelevant because $V, W$ are parallel. On the other hand, if $\Sigma = S^2$, then the $\cK$ is isomorphic to $\mathfrak{so}(3)$ and hence gets an induced inner product from the pairing $(A, B) = \frac{1}{2}\tr(AB^{T})$ on the latter. 

\begin{prop}\label{prop:trace-pointwise} Let $X_{1}, \cdots, X_{q}$ be any orthonormal basis for $\cK$. Then we have
\begin{equation}\label{eq:trace-pointwise}
\sum_{i = 1}^{q}\left[\re\langle J^{1}_{(u, \nabla_A)}(\sigma_{X_i}, \iota_{X_i}\varphi), \sigma_{X_i} \rangle + \langle J^{2}_{(u, \nabla_A)}(\sigma_{X_i}, \iota_{X_i}\varphi), \iota_{X_i}\varphi \rangle\right] = - (f + h)|\sigma|^{2},
\end{equation}
as functions on $\Sigma$.
\end{prop}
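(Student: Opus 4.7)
The key observation underlying Proposition~\ref{prop:trace-pointwise} is that, by Lemma~\ref{lemm:Killing-contraction}, both $J^{1}_{(u,\nabla_A)}(\sigma_X, \iota_X\varphi)$ and $J^{2}_{(u,\nabla_A)}(\sigma_X, \iota_X\varphi)$ are expressed purely in terms of the pointwise value $X(x)$, with no derivatives of $X$ appearing. Consequently, the summand on the left-hand side of~\eqref{eq:trace-pointwise} is, at each $x \in \Sigma$, a quadratic form in the tangent vector $X(x) \in T_x\Sigma$. By the property of the inner product on $\cK$ recalled just before the proposition, at each $x$ one may choose an orthonormal basis $X_1,\ldots,X_q$ of $\cK$ for which $e_1 := X_1(x)$ and $e_2 := X_2(x)$ form an orthonormal frame of $T_x\Sigma$, while $X_i(x) = 0$ for $i > 2$. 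Since the sum in~\eqref{eq:trace-pointwise} is the trace of such a quadratic form, and hence independent of the orthonormal basis chosen, the plan is to verify the identity at each $x$ using this adapted basis, reducing everything to a computation in the two tangent directions $e_1, e_2$.

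With this reduction in hand, I would substitute the formulas from Lemma~\ref{lemm:Killing-contraction} and simplify. For the $J^1$ contribution, the cleanest path goes via the $\sqrt{-1}\ast$ eigenspace decomposition: setting $\tau := d_A u + \sqrt{-1}\ast d_A u$, one has $d_A u = \tfrac{1}{2}\sigma + \tfrac{1}{2}\tau$ where $\sqrt{-1}\ast \tau = \tau$ and $\sqrt{-1}\ast \sigma = -\sigma$; a short direct check then shows $\sigma \perp \tau$ in the pointwise real $L$-valued inner product on $1$-forms. Taking inner products then yields
\[
\sum_{i=1,2} \re\langle (d_A u)_{e_i}, \sigma_{e_i}\rangle = \tfrac{1}{2}|\sigma|^2, \qquad \sum_{i=1,2} \re\langle \sqrt{-1}(\ast d_A u)_{e_i}, \sigma_{e_i}\rangle = -\tfrac{1}{2}|\sigma|^2,
\]
so the $J^1$ part assembles to $-2f\cdot \tfrac{1}{2}|\sigma|^2 + 2h\cdot(-\tfrac{1}{2}|\sigma|^2) = -(f+h)|\sigma|^2$.

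For the $J^2$ contribution, I would use $\varphi = (f-h)\ast 1$ together with the explicit identifications $\iota_{e_1}(\ast 1) = e^2$ and $\iota_{e_2}(\ast 1) = -e^1$ in the chosen frame to expand the two summands of Lemma~\ref{lemm:Killing-contraction}(b) separately. A direct frame calculation shows the $-|d_A u|^2\iota_X(\ast 1)$ piece contributes a total of $-2(f-h)|d_A u|^2$, while the $-2\re\langle (\ast d_A u)_X, d_A u\rangle$ piece, after pairing with $\iota_{e_i}\varphi$ and summing, contributes exactly $+2(f-h)|d_A u|^2$. These cancel, leaving no net $J^2$ contribution. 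Adding the two parts therefore yields precisely $-(f+h)|\sigma|^2$, as desired. The only conceptually nontrivial ingredient is the basis-independence observation in the first paragraph; the ensuing manipulations are routine linear algebra in the two-dimensional fiber.
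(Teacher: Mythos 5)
Your proposal is correct and follows essentially the same route as the paper: reduce to the adapted orthonormal basis of $\cK$ via basis-independence of the trace, substitute the formulas of Lemma~\ref{lemm:Killing-contraction}, and observe that the two $J^2$ terms cancel while the $J^1$ terms assemble to $-(f+h)|\sigma|^2$. Your use of the $\sqrt{-1}\ast$-eigenspace decomposition $d_Au = \tfrac12(\sigma+\tau)$ to organize the $J^1$ computation is only a cosmetic variant of the paper's direct expansion of $\re\langle \cdot, \sigma\rangle$.
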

\begin{proof}
Fix an arbitrary $x \in \Sigma$ and let $X_{1}, \cdots X_q$ be an orthonormal basis for $\cK$ such that $e_1:= X_{1}(x), e_2: =X_{2}(x)$ is an orthonormal basis for $T_{x}\Sigma$, and $X_{i}(x) = 0$ for $i > 2$. Observe that, to prove the Proposition, it suffices to verify~\eqref{eq:trace-pointwise} for this particular choice of the $X_i$'s. This is because the left-hand side of~\eqref{eq:trace-pointwise} as a function on $\Sigma$ is invariant when we change to another orthonormal basis for $\cK$, since $J^{1}_{u, \nabla_A}$ and $J^{2}_{u, \nabla_A}$ are linear operators.

Now, by our choice of the $X_{i}$'s, and the fact that they are Killing fields, we can apply Lemma~\ref{lemm:Killing-contraction} to see that the left-hand side of~\eqref{eq:trace-pointwise} evaluated at $x$ is equal to
\begin{align*}
&\left[ 2h \re\langle \sqrt{-1}\ast d_{A}u, \sigma \rangle - 2f\re\langle d_{A}u, \sigma \rangle \right] -(f - h)(\ast 1)_{e_i, e_j}\left[ |d_{A}u|^{2}(\ast 1)_{e_i, e_j} + 2\re\langle (\ast d_{A}u)_{e_{i}}, (d_{A}u)_{e_j} \rangle\right]\\
&= \left[ 2h \re\langle \sqrt{-1}\ast d_{A}u, \sigma \rangle - 2f\re\langle d_{A}u, \sigma \rangle \right],
\end{align*}
where in getting the last line we made a cancellation with the help of the following identities:
\[
\sum_{i, j = 1}^{2}(\ast 1)_{e_i, e_j}\re\langle (\ast d_{A}u)_{e_i}, (d_{A}u)_{e_{j}} \rangle = -|d_{A}u|^{2}\ ;\ \sum_{i, j = 1}^2 (\ast 1)_{e_i, e_j}(\ast 1)_{e_i, e_j} = 2.
\]
The first identity can be checked by recalling that, from the definition of the Hodge star operator, we have
\begin{align*}
(\ast d_A u)_{e_1} &= -(d_A u)_{e_2},\\
(\ast d_A u)_{e_2} &= (d_A u)_{e_1}.
\end{align*}
Recalling the definition of $\sigma$ and expanding the inner products, we get
\[
2h \re\langle \sqrt{-1}\ast d_{A}u, \sigma \rangle - 2f\re\langle d_{A}u, \sigma \rangle = (f + h) \big( 2\re\langle \sqrt{-1}\ast d_{A}u, d_{A}u \rangle  - 2|d_{A}u|^{2}\big) = -(f + h)|\sigma|^{2},
\]
as asserted.
\end{proof}
\begin{coro}\label{coro:trace} We have
\[
\tr_{\cK}Q = -\int_{\Sigma}(f + h)|\sigma|^{2}d\mu_{g}.
\]
\end{coro}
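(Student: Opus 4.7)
The plan is to integrate the pointwise identity in Proposition~\ref{prop:trace-pointwise} and convert it into a statement about the trace of $Q$ via the second-variation formula.

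First, I would fix a global $(\cdot,\cdot)_{\cK}$-orthonormal basis $X_1,\ldots,X_q$ of $\cK$ and, by definition of the trace of a quadratic form, write
\[
\tr_{\cK} Q \;=\; \sum_{i=1}^{q} Q(X_i) \;=\; \sum_{i=1}^{q} \delta^{2}E_{\ep}(u,\nabla_A)(\sigma_{X_i},\iota_{X_i}\varphi).
\]
Each summand is then expanded using the identity, recalled in Section~\ref{sec:YMH-basics}, that relates $\delta^{2}E_\ep$ to the Jacobi operators $J^{1}_{(u,\nabla_A)},J^{2}_{(u,\nabla_A)}$. After interchanging the finite sum with the integral over $\Sigma$ (permissible since the sum is finite and the integrands are bounded on the closed surface), we obtain
\[
\tr_{\cK} Q \;=\; 2\int_{\Sigma} \sum_{i=1}^{q}\Bigl[\re\langle J^{1}(\sigma_{X_i},\iota_{X_i}\varphi),\sigma_{X_i}\rangle + \langle J^{2}(\sigma_{X_i},\iota_{X_i}\varphi),\iota_{X_i}\varphi\rangle\Bigr]\,d\mu_g .
\]

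At this stage I would invoke Proposition~\ref{prop:trace-pointwise} directly: at every point of $\Sigma$ the integrand in square brackets equals $-(f+h)|\sigma|^{2}$, independently of the choice of orthonormal basis of $\cK$ (as was already noted in the proof of Proposition~\ref{prop:trace-pointwise}, because $J^{1},J^{2}$ are linear in their arguments). Substituting this pointwise identity into the integral yields the claimed formula, up to the conventional normalizing factor of $2$ coming from the second-variation formula.

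There is no real obstacle here; the analytic and algebraic content has already been absorbed into Lemma~\ref{lemm:Killing-contraction} and Proposition~\ref{prop:trace-pointwise}. What remains is a routine Fubini-style interchange of a finite sum and an integral, together with the definition-chasing that identifies $\tr_{\cK} Q$ with the sum $\sum_i Q(X_i)$.
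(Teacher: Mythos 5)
Your proof is correct and follows essentially the same route as the paper: write $\tr_{\cK}Q=\sum_i Q(X_i)$, express each $Q(X_i)$ via the Jacobi operators, and apply Proposition~\ref{prop:trace-pointwise} pointwise. The factor of $2$ you flag is a genuine (harmless) normalization discrepancy already present in the paper, whose own proof of the corollary silently drops the $2$ appearing in its earlier identity relating $\delta^2E_\ep$ to $J^1,J^2$; since the result is only used to conclude $\int_\Sigma(f+h)|\sigma|^2\,d\mu_g\le 0$, nothing downstream is affected.
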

\begin{proof}
By definition of $Q$, we have
\[
\tr_{\cK}Q =\int_{\Sigma}\sum_{i = 1}^{q}\left[\re\langle J^{1}_{(u, \nabla_A)}(\sigma_{X_i}, \iota_{X_i}\varphi), \sigma_{X_i} \rangle + \langle J^{2}_{(u, \nabla_A)}(\sigma_{X_i}, \iota_{X_i}\varphi), \iota_{X_i}\varphi \rangle\right] d\mu_{g},
\]
where $X_{1}, \cdots, X_{q}$ is an orthonormal basis for $\cK$. The result now follows from Proposition~\ref{prop:trace-pointwise}.
\end{proof}
\begin{proof}[Proof of Theorem~\ref{thm:main}]
We first deal with the case where (H) is assumed. By Proposition~\ref{prop:global-regularity} we may assume that $(u, \nabla_A)$ is smooth. Since $(u, \nabla_A)$ is stable by assumption, from Corollary~\ref{coro:trace} we have
\[
\int_{\Sigma}(f + h)|\sigma|^{2}d\mu_g = -\tr_{\cK}Q \leq 0.
\]
By Proposition~\ref{prop:estimates}(a), we have that $f + h \geq 0$, so the above inequality implies that 
\[
(f+ h) |\sigma|^{2} = 0 \text{ everywhere on }\Sigma.
\]
Now recall, again by Proposition~\ref{prop:estimates}(a), that $f + h$ is either identically zero or everywhere positive. In the former case, we are done by Proposition~\ref{prop:estimates}(b). In the latter case, we get $\sigma \equiv 0$ and again we are done by Proposition~\ref{prop:estimates}(b).

To prove Theorem~\ref{thm:main} assuming (H') instead, we need only consider the case $u \equiv 0$, since otherwise we reduce to the previous case. Note that since $u \equiv 0$, we have from~\eqref{eq:YMH} that 
\[
d^{\ast}F_A = 0.
\]
Consequently $\sqrt{-1}F_A$ is a real harmonic $2$-form on $\Sigma$ and hence a constant multiple of $d\mu_g$. Recalling~\eqref{eq:deg-vortex}, we see that we must have
\begin{equation}\label{eq:harmonic-curvature}
\ast\sqrt{-1}F_{A} = \frac{d}{2}.
\end{equation}
In particular, in the case $|d| = \ep^{-2}$, since $u \equiv 0$, we have
\[
\ast\sqrt{-1}F_A = \pm\frac{1}{2\ep^2} =\pm\frac{1 - |u|^2}{2\ep^2}.
\]
Hence $(u, \nabla_A)$ solves~\eqref{eq:vortex} or~\eqref{eq:anti-vortex} by Proposition~\ref{prop:estimates}(b).

On the other hand, if $|d| < \ep^{-2}$, then $u \equiv 0$ contradicts stability and thus cannot occur. Indeed, observe that in this case the second variation formula~\eqref{eq:2nd-variation} gives
\begin{equation}\label{eq:reduced-2nd-variation}
\delta^2 E_{\ep}(0, \nabla_A)(v, 0) = \int_{S^2} 2|d_A v|^2 - \frac{|v|^2}{\ep^2}.
\end{equation}
Now because $\Sigma = S^2$ and because of~\eqref{eq:harmonic-curvature}, we can invoke~\cite[Theorem 5.1]{Ku} to see that the lowest eigenvalue of $d_{A}^{\ast}d_A$ acting on sections of $L$ is equal to $\frac{|d|}{2}$. That is
\begin{equation}\label{eq:eigenvalue}
\inf \left\{ \int_{S^2}|d_A v|^2 d\mu_g \ \big|\ v \in \Omega^0(L),\ \int_{S^2}|v|^2 d\mu_g = 1\right\} = \frac{|d|}{2}.
\end{equation}
This together with~\eqref{eq:reduced-2nd-variation} and the assumption $|d| < \ep^{-2}$ shows that $(0, \nabla_A)$ is unstable, a contradiction.
\end{proof}
\begin{rmk}
Here we clarify Remark~\ref{rmk:main}(2). Note that if~\eqref{eq:harmonic-curvature} holds for $\nabla_A$, then $(0, \nabla_A)$ solves~\eqref{eq:YMH}. Moreover, since $\Sigma = S^2$, we get~\eqref{eq:eigenvalue} thanks to~\cite{Ku}. By~\eqref{eq:2nd-variation}, this implies that for all $(v, a) \in \Omega^{0}(L) \times \Omega^1(S^2)$, we have
\begin{align*}
\delta^2 E_{\ep}(0, \nabla_A)(v, a) &= \int_{S^2} 2\ep^2 |da|^2 + 2|d_A v|^2 - \frac{|v|^2}{\ep^2} \geq \int_{S^2}2|d_A v|^2 - \frac{|v|^2}{\ep^2}\\
&\geq \Big( |d| - \frac{1}{\ep^2}\Big)\int_{S^2} |v|^2 \geq 0,
\end{align*}
provided $|d| \geq \ep^{-2}$. In other words, $(0, \nabla_A)$ is stable when $|d| \geq \ep^{-2}$. However, for $(0, \nabla_A)$ to be a solution to~\eqref{eq:vortex} or~\eqref{eq:anti-vortex}, we must have $|d| = \ep^{-2}$. Thus, there exist stable solutions to~\eqref{eq:YMH} on $S^2$ which do not satisfy either of the vortex equations when $|d| > \ep^{-2}$, because we can certainly find a connection on $L$ whose curvature satisfies~\eqref{eq:harmonic-curvature}, for example by looking at the Hodge decomposition of $F_0$, the curvature of the background connection $\nabla_0$. 
\end{rmk}
\bibliographystyle{amsalpha}
\bibliography{main}

\providecommand{\bysame}{\leavevmode\hbox to3em{\hrulefill}\thinspace}
\providecommand{\MR}{\relax\ifhmode\unskip\space\fi MR }
\providecommand{\MRhref}[2]{%
  \href{http://www.ams.org/mathscinet-getitem?mr=#1}{#2}
}
\providecommand{\href}[2]{#2}
\begin{thebibliography}{BBDBR89}

\bibitem[Aig01]{Ai}
Mats Aigner, \emph{Existence of the {G}inzburg-{L}andau vortex number}, Comm.
  Math. Phys. \textbf{216} (2001), no.~1, 17--22.

\bibitem[BBDBR89]{BBDP}
D.~Burns, F.~Burstall, P.~De~Bartolomeis, and J.~Rawnsley, \emph{Stability of
  harmonic maps of {K}\"{a}hler manifolds}, J. Differential Geom. \textbf{30}
  (1989), no.~2, 579--594.

\bibitem[BBH94]{BBH}
Fabrice Bethuel, Ha\"{\i}m Brezis, and Fr\'{e}d\'{e}ric H\'{e}lein,
  \emph{Ginzburg-{L}andau vortices}, Progress in Nonlinear Differential
  Equations and their Applications, vol.~13, Birkh\"{a}user Boston, Inc.,
  Boston, MA, 1994.

\bibitem[BGP97]{BrGP}
Steven~B. Bradlow and Oscar Garc\'{\i}a-Prada, \emph{Non-abelian monopoles and
  vortices}, Geometry and physics ({A}arhus, 1995), Lecture Notes in Pure and
  Appl. Math., vol. 184, Dekker, New York, 1997, pp.~567--589.

\bibitem[BL81]{BL}
Jean-Pierre Bourguignon and H.~Blaine Lawson, Jr., \emph{Stability and
  isolation phenomena for {Y}ang-{M}ills fields}, Comm. Math. Phys. \textbf{79}
  (1981), no.~2, 189--230.

\bibitem[Bog76]{Bol}
E.~B. Bogomol'nyi, \emph{The stability of classical solutions}, Soviet J.
  Nuclear Phys. \textbf{24} (1976), no.~4, 449--454.

\bibitem[Bra90]{Br1}
Steven~B. Bradlow, \emph{Vortices in holomorphic line bundles over closed
  {K}\"{a}hler manifolds}, Comm. Math. Phys. \textbf{135} (1990), no.~1, 1--17.

\bibitem[CH78]{CaHo}
Richard~G. Casten and Charles~J. Holland, \emph{Instability results for
  reaction diffusion equations with {N}eumann boundary conditions}, J.
  Differential Equations \textbf{27} (1978), no.~2, 266--273.

\bibitem[Che13]{Ch}
Ko-Shin Chen, \emph{Instability of {G}inzburg-{L}andau vortices on manifolds},
  Proc. Roy. Soc. Edinburgh Sect. A \textbf{143} (2013), no.~2, 337--350.

\bibitem[Che19]{Che}
Da~Rong Cheng, \emph{Instability of solutions to the {G}inzburg-{L}andau
  equation on {$S^n$} and {$\mathbb{CP}^n$}}, arXiv:1911.04097 [math.DG]
  (2019), to appear in J. Funct. Anal.

\bibitem[DK90]{DK}
S.~K. Donaldson and P.~B. Kronheimer, \emph{The geometry of four-manifolds},
  Oxford Mathematical Monographs, The Clarendon Press, Oxford University Press,
  New York, 1990, Oxford Science Publications.

\bibitem[EL78]{EL}
J.~Eells and L.~Lemaire, \emph{A report on harmonic maps}, Bull. London Math.
  Soc. \textbf{10} (1978), no.~1, 1--68.

\bibitem[GP93]{GP2}
Oscar Garc\'{\i}a-Prada, \emph{Invariant connections and vortices}, Comm. Math.
  Phys. \textbf{156} (1993), no.~3, 527--546.

\bibitem[GP94]{GP1}
\bysame, \emph{A direct existence proof for the vortex equations over a compact
  {R}iemann surface}, Bull. London Math. Soc. \textbf{26} (1994), no.~1,
  88--96.

\bibitem[GP98]{GPsurvey}
\bysame, \emph{Seiberg-{W}itten invariants and vortex equations}, Sym\'{e}tries
  quantiques ({L}es {H}ouches, 1995), North-Holland, Amsterdam, 1998,
  pp.~885--934.

\bibitem[GS00]{GuSi}
S.~Gustafson and I.~M. Sigal, \emph{The stability of magnetic vortices}, Comm.
  Math. Phys. \textbf{212} (2000), no.~2, 257--275.

\bibitem[JM94]{JM}
Shuichi Jimbo and Yoshihisa Morita, \emph{Stability of nonconstant steady-state
  solutions to a {G}inzburg-{L}andau equation in higher space dimensions},
  Nonlinear Anal. \textbf{22} (1994), no.~6, 753--770.

\bibitem[JS02]{JiSt}
Shuichi Jimbo and Peter Sternberg, \emph{Nonexistence of permanent currents in
  convex planar samples}, SIAM J. Math. Anal. \textbf{33} (2002), no.~6,
  1379--1392.

\bibitem[JT80]{JT}
Arthur Jaffe and Clifford Taubes, \emph{Vortices and monopoles}, Progress in
  Physics, vol.~2, Birkh\"{a}user, Boston, Mass., 1980, Structure of static
  gauge theories.

\bibitem[Kuw82]{Ku}
Ruishi Kuwabara, \emph{On spectra of the {L}aplacian on vector bundles}, J.
  Math. Tokushima Univ. \textbf{16} (1982), 1--23.

\bibitem[LS73]{LS}
H.~Blaine Lawson, Jr. and James Simons, \emph{On stable currents and their
  application to global problems in real and complex geometry}, Ann. of Math.
  (2) \textbf{98} (1973), 427--450.

\bibitem[Mat79]{Mat}
Hiroshi Matano, \emph{Asymptotic behavior and stability of solutions of
  semilinear diffusion equations}, Publ. Res. Inst. Math. Sci. \textbf{15}
  (1979), no.~2, 401--454.

\bibitem[Mic84]{Mi}
Mario~J. Micallef, \emph{Stable minimal surfaces in {E}uclidean space}, J.
  Differential Geom. \textbf{19} (1984), no.~1, 57--84.

\bibitem[Nog87]{No}
Mitsunori Noguchi, \emph{Yang-{M}ills-{H}iggs theory on a compact {R}iemann
  surface}, J. Math. Phys. \textbf{28} (1987), no.~10, 2343--2346.

\bibitem[PR00]{PT}
Frank Pacard and Tristan Rivi\`ere, \emph{Linear and nonlinear aspects of
  vortices}, Progress in Nonlinear Differential Equations and their
  Applications, vol.~39, Birkh\"{a}user Boston, Inc., Boston, MA, 2000, The
  Ginzburg-Landau model.

\bibitem[PS19]{PiSt}
A.~Pigati and D.~Stern, \emph{Minimal submanifolds from the abelian {H}iggs
  model}, arXiv:1905.13726 [math.DG] (2019).

\bibitem[Ser05]{Se}
Sylvia Serfaty, \emph{Stability in 2{D} {G}inzburg-{L}andau passes to the
  limit}, Indiana Univ. Math. J. \textbf{54} (2005), no.~1, 199--221.

\bibitem[SS07]{SaSe}
Etienne Sandier and Sylvia Serfaty, \emph{Vortices in the magnetic
  {G}inzburg-{L}andau model}, Progress in Nonlinear Differential Equations and
  their Applications, vol.~70, Birkh\"{a}user Boston, Inc., Boston, MA, 2007.

\bibitem[Ste10]{St}
Mark Stern, \emph{Geometry of minimal energy {Y}ang-{M}ills connections}, J.
  Differential Geom. \textbf{86} (2010), no.~1, 163--188.

\bibitem[SY80]{SY}
Yum~Tong Siu and Shing~Tung Yau, \emph{Compact {K}\"{a}hler manifolds of
  positive bisectional curvature}, Invent. Math. \textbf{59} (1980), no.~2,
  189--204.

\bibitem[Tau80a]{Tau2}
Clifford~Henry Taubes, \emph{Arbitrary {$N$}-vortex solutions to the first
  order {G}inzburg-{L}andau equations}, Comm. Math. Phys. \textbf{72} (1980),
  no.~3, 277--292.

\bibitem[Tau80b]{Tau1}
\bysame, \emph{On the equivalence of the first and second order equations for
  gauge theories}, Comm. Math. Phys. \textbf{75} (1980), no.~3, 207--227.

\end{thebibliography}
\end{document}